\theoremstyle{plain}
\newtheorem{thm}{Theorem}[section]
\newtheorem{lemme}[thm]{Lemma}
\newtheorem{prop}[thm]{Proposition}
\newtheorem{cor}[thm]{Corollary}
\newtheorem*{thm*}{Theorem}
\theoremstyle{definition}
\newtheorem{defn}[thm]{Definition}
\newtheorem*{defn*}{Definition}
\newtheorem{notat}[thm]{Notation}
\newtheorem*{conj*}{Conjecture}
\newtheorem{rem}[thm]{Remark}
\DeclarePairedDelimiter\abs{\lvert}{\rvert}
\newcommand*\conjug[1]{\overline{#1}}
\renewcommand{\Re}{\mathrm{Re}}
\renewcommand*\setminus{\ensuremath{{}-{}}}
\newcommand{\HH}{\mathbb{H}}
\newcommand{\CC}{\mathbb{C}}
\newcommand{\RR}{\mathbb{R}}
\newcommand{\ZZ}{\mathbb{Z}}
\newcommand{\QQ}{\mathbb{Q}}
\newcommand{\NN}{\mathbb{N}}
\DeclareMathOperator{\dev}{Dev}
\DeclareMathOperator{\pr}{pr}
\newcommand{\hc}[1]{\mathbb{H}^{#1}_{\mathbb{C}}}
\newcommand{\dhc}[1]{\partial_{\infty} \mathbb{H}^{#1}_{\mathbb{C}}}
\newcommand{\zz}[1]{\mathbb{Z}/{#1}\mathbb{Z}*\mathbb{Z}/{#1}\mathbb{Z}}
\newcommand{\cp}[1]{\mathbb{CP}^{#1}}
\title{On the limit set of a spherical CR uniformization}
\author{
	\textsc{Miguel ACOSTA}
	\thanks{Miguel Acosta was partially supported by the grants R-STR-8023-00-B <<MnLU-MESR CAFE-AutoFi>> and R-AGR-3172-10-C <<FNR-OPEN>>.} 
	\footnote{
		  Unité de recherche en Mathématiques,
		  Université du Luxembourg, 
		  Maison du Nombre,
		  6, Avenue de la Fonte,
		  L-4364 Esch-sur-Alzette,
		  Luxembourg.}
	}
\begin{document}
\maketitle

\begin{abstract}
	We explore the limit set of a particular spherical CR uniformization of a cusped hyperbolic manifold. We prove that the limit set is the closure of a countable union of $\mathbb{R}$-circles, is connected, and contains a Hopf link with three components; we also show that the fundamental group of its complement in $S^3$ is not finitely generated. Additionally, we prove that rank-one spherical CR cusps are quotients of horotubes. 
\end{abstract}

%\tableofcontents
\section{Introduction}

 Studying the geometric structures on three-dimensional manifolds is a very powerful tool to link topological properties and geometric ones, as confirmed by the Thurston geometrization conjecture proved by Perelman in 2003. Here, we will consider geometric structures in the language of $(G,X)$-structures, as described for example in \cite{gt3m}. In this context, $X$ is a model space and $G$ is a group acting transitively and analytically on it; a geometric structure on a manifold $M$ is an atlas of $M$ with values on $X$ and transition maps given by elements of $G$.
 A case of particular interest is the one of \emph{complete} structures, that can be written as $\Gamma \backslash X$, where $\Gamma$ is a subgroup of $G$ acting properly discontinuously and without fixed points on $X$. In this case, all the information on the manifold and the structure is contained in the group $\Gamma$.
 A similar situation is the one of uniformizable structures, that arises naturally when looking at conformally flat structures or the spherical CR structures that we consider in this article. In those cases, we say that a structure is \emph{uniformizable} if it can be written as $\Gamma \backslash \Omega_\Gamma$, where $\Omega_\Gamma \subset X$ is the set of discontinuity of $\Gamma$. Observe that, in these cases, the set of discontinuity is the complement of the limit set $\Lambda_\Gamma$ of $\Gamma$.
 
 For the conformal structures arising from the boundary at infinity of the real hyperbolic space $\HH_\RR^3$, a very well-known case is the one of Fuchsian representations of a surface group. The image of these representations have round circles in $S^2$ as limit sets  and uniformize two copies of the same hyperbolic surface. By the remarkable Bers double uniformization theorem, their deformations, which are quasi-Fuchsian representations, still uniformize two copies of the surface, and the limit set remains a topological circle.
 In the light of the work of Guichard and Wienhard in \cite{guichard_anosov_2012}, this last point about the limit set is expected, since the representation of the surface group is Anosov, and this condition is open.
 
 In this article, we will focus on a particular uniformization in spherical CR geometry, which is modeled on the boundary at infinity of the complex hyperbolic plane $\hc{2}$ and has group of transformations $\mathrm{PU}(2,1)$. As in the case of conformal structures coming from the boundary at infinity of $\HH_\RR^3$, there are also analogs to the Fuchsian representations, but coming in two different flavors.
 On the one hand, a surface group can be embedded in $\mathrm{PU}(2,1)$ as a subgroup of $\mathrm{PO}(2,1)$. The corresponding representation is then called \emph{$\RR$-Fuchsian}. By a computation by Burns and Shnider in \cite[Proposition 6.1]{burns_spherical_1976}, these representations uniformize the unit tangent bundle of the corresponding surface. The corresponding limit set is the boundary at infinity of a totally real subspace of $\hc{2}$, and is called an \emph{$\RR$-circle}.
 On the other hand, it can be embedded in $\mathrm{PU}(2,1)$ as a subgroup of $\mathrm{PU}(1,1)$. The corresponding representation is then called \emph{$\CC$-Fuchsian}, and still uniformizes a circle bundle over the surface. The limit set in this case is the boundary at infinity of a complex line in $\hc{2}$, and is called a \emph{$\CC$-circle}. From this construction, Falbel and Gusevskii build in \cite{falbel-gusevskii} spherical CR uniformizations for circle bundles over surfaces of arbitrary Euler number.
 
 Apart from these Fuchsian examples and the quotients of $S^3$, we know a family of cusped hyperbolic 3-manifolds admitting spherical CR uniformizations. On the side of the manifolds, there are two different uniformizations of the Whitehead link complement: one given by Schwartz in \cite{schwartz} and a second one given by Parker and Will in \cite{parker_complex_2017a}. By deforming this last structure, we prove in \cite{acosta_spherical_2019} that an infinite family of Dehn surgeries on one of the cusps of the Whitehead link complement admit spherical CR uniformizations. The structure on the last surgery of the family is precisely the spherical CR uniformization of the Figure Eight knot complement constructed by Deraux an Falbel in \cite{falbel}. As for the groups that uniformize these cusped manifolds, they are index two subgroups of triangle groups: the discreteness of most of these representations can be proved from this viewpoint, as done by Parker, Wang and Xie in \cite{parker_wang_xie}. Recall that if $p$, $q$ and $r$ are integers $\geq 2$, the $(p,q,r)$-triangle group is the abstract group with presentation
 \[
 \left\langle 
 \sigma_1 , \sigma_2 , \sigma_3 \mid
 \sigma_1^2 = \sigma_2^2 = \sigma_3^2 = 
 (\sigma_1 \sigma_2)^p=
 (\sigma_2 \sigma_3)^q=
 (\sigma_3 \sigma_1)^r
 = \mathrm{Id}
 \right\rangle .
 \]
 
  In a more general frame, the discreteness of triangle groups into $\mathrm{PU(2,1)}$ has been widely studied, see for example the survey of Schwartz \cite{schwartz_complex_2002}, or the article of Deraux \cite{deraux_deforming_2006}, where he proves that there is an explicit representation of the $(4,4,4)$-triangle group into $\mathrm{PU}(2,1)$ whose image is a cocompact lattice.
 
 For the spherical CR uniformization of cusped hyperbolic manifolds, no much is known about the corresponding limit sets. The aim of this article is to explore the limit set of a particular uniformization of a cusped hyperbolic manifold, that can be obtained as a Dehn surgery of the Whitehead link complement. We will denote the corresponding group by $\Gamma_6$ or $\Gamma$, if there is no ambiguity.
 It is the image of a particular representation of the $(3,3,6)$-triangle group, where a parabolic element appears.
 Gathering the results of \Cref{sect:r_circle_limit_set}, if $\Gamma_6$ is the uniformization group, $\Lambda_{\Gamma_6} \subset \dhc{2} \simeq S^3$ is its limit set, and $\Omega_{\Gamma_6} = \dhc{2} \setminus \Lambda_{\Gamma_6}$, we obtain:
 
 \begin{thm}
 	We have that
 	\begin{itemize}
 		\item The limit set $\Lambda_{\Gamma_6}$ is connected and the closure in $\dhc{2}$ of a countable union of $\RR$-circles.
 		\item The limit set $\Lambda_{\Gamma_6}$ contains a Hopf link with three components.
 		\item The fundamental group of $\Omega_{\Gamma_6}$ is not finitely generated.
 	\end{itemize}
 \end{thm}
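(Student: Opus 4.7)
The plan is to exploit the triangle group structure of $\Gamma_6$ combined with standard dynamical properties of non-elementary discrete subgroups of $\mathrm{PU}(2,1)$. Since $\Gamma_6$ comes from a representation of the $(3,3,6)$-triangle group, its generating involutions $\sigma_1,\sigma_2,\sigma_3$ each have a fixed point set on $\dhc{2}$ that is either a $\CC$-circle or an $\RR$-circle, according to whether the involution is a complex or Lagrangian reflection. The explicit appearance of $\RR$-circles in the statement indicates that at least some of these generators are Lagrangian reflections, whose boundary fixed sets are exactly $\RR$-circles.

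For the first bullet, I would fix such a Lagrangian involution $\sigma$ in $\Gamma_6$ and let $C \subset \dhc{2}$ denote its fixed $\RR$-circle. Since $\Gamma_6$ is non-elementary, every point of $C$ is a limit point, so $C \subset \Lambda_{\Gamma_6}$. By $\Gamma_6$-invariance and closedness of the limit set, the closure of the $\Gamma_6$-orbit of $C$ --- a countable union of $\RR$-circles --- is contained in $\Lambda_{\Gamma_6}$. The reverse inclusion follows from the standard fact that $\Lambda_{\Gamma_6}$ coincides with the closure of the $\Gamma_6$-orbit of any single one of its points. For connectedness, the key observation is that the parabolic element appearing in the representation fixes a point lying simultaneously on two of the reflection $\RR$-circles --- the two whose product is that parabolic --- so those $\RR$-circles intersect. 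Transporting this intersection through the orbit yields a path-connected union of $\RR$-circles, whose closure is therefore connected.

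For the second bullet, I expect the three-component Hopf link to arise from three $\RR$-circles concentrated near the parabolic cusp. After conjugating so that the parabolic fixed point is sent to infinity in the Heisenberg model of $\dhc{2}$, the three relevant $\RR$-circles (for instance the reflection $\RR$-circles of three suitable involutions, or three translates of a single $\RR$-circle under the maximal parabolic subgroup) become explicit curves whose pairwise linking numbers can be read off directly from their Heisenberg coordinates. The verification reduces to checking that each pair links with linking number $\pm 1$, which should amount to a routine but concrete Heisenberg computation.

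For the third bullet, which I expect to be the main obstacle, the goal is to show that $\Omega_{\Gamma_6}$ has no compact core, ruling out finite generation of $\pi_1(\Omega_{\Gamma_6})$. The countable orbit of $\RR$-circles accumulates onto the entire limit set, and each such $\RR$-circle admits a meridian loop in $\Omega_{\Gamma_6}$. The crucial step is to certify that infinitely many of these meridians represent pairwise non-conjugate, non-trivial classes in $\pi_1(\Omega_{\Gamma_6})$. For this I would invoke the Hopf link configuration of the previous bullet: a meridian around one component of a Hopf-linked triple cannot be null-homotopic in the complement of the remaining two components, hence cannot be null-homotopic in $\Omega_{\Gamma_6}$. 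Spreading this configuration along the infinite orbit of the cusp parabolic produces an infinite family of independent classes in $\pi_1(\Omega_{\Gamma_6})$, contradicting finite generation. The delicate technical point is to ensure that these classes do not collapse together as the $\RR$-circles accumulate, which requires careful control of how the orbit $\RR$-circles interleave near the limit set.
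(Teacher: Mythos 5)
Your starting point for the first bullet does not match how the $\RR$-circles actually arise, and the argument built on it fails. The group $\Gamma_6$ lies in $\mathrm{PU}(2,1)$, so all its elements are holomorphic; the involutions $I_1,I_2,I_3$ of the ambient $(3,3,6)$-triangle group are complex reflections whose boundary fixed sets are $\CC$-circles, and there is no Lagrangian (antiholomorphic) involution in the group whose fixed $\RR$-circle you could use. Moreover, even if such an involution existed, the claim that non-elementarity forces its entire fixed circle into $\Lambda_{\Gamma_6}$ is false: the limit set is the \emph{smallest} closed invariant set, and fixed-point sets of finite-order elements need not lie in it. The paper obtains the $\RR$-circle by a completely different mechanism: a string of beads (four pairwise tangent spinal spheres bounding the Dirichlet domain) shows that the subgroup $\Gamma'=\langle B, V^3BV^{-3}\rangle$, generated by two parabolics, is free with limit set a topological circle, and an explicit matrix computation shows this circle is contained in, hence equal to, an $\RR$-circle $R_0$. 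Your connectedness idea (intersecting circles sharing parabolic fixed points, propagated along the orbit) is in the right spirit --- the paper proves $R_0\cap CR_0\neq\emptyset$ for $C\in\{A^{\pm1},B^{\pm1}\}$ via shared parabolic fixed points and chains these intersections along words in $A,B$ --- but it has to be run with these orbit circles, not with reflection circles that do not exist.

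The third bullet has a genuine logical gap: exhibiting infinitely many pairwise non-conjugate nontrivial classes in $\pi_1(\Omega_{\Gamma_6})$ does not contradict finite generation (a rank-two free group already contains such families), so "an infinite family of independent classes, contradicting finite generation" is not a valid conclusion without substantially more structure. The paper's argument is different and complete: assuming finite generation, Scott's theorem gives a compact core $N$; a meridian $\gamma$ of the horotube at $[p_B]$ is linked once with $R_0$, so every loop freely homotopic to $\gamma$ in $\Omega_\Gamma$ must meet a fixed spanning disk $D_0$ of $R_0$; pushing $N$ by high powers of the parabolic $VBV^{-1}$ moves the core into a small neighborhood of $V[p_B]$ disjoint from $\overline{D_0}$ while it remains a compact core, a contradiction. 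For the Hopf link, your proposal is only a plan, and the variant using translates of one $\RR$-circle under the parabolic subgroup is doubtful; the paper instead shows $R_0$ is linked with the two invariant $\CC$-circles of the regular elliptic element $V$ (because the fixed point $[p_V]\in\hc{2}$ lies on the real plane bounded by $R_0$) and takes the triple $R_0$, $VR_0$, $V^2R_0$.
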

 
 This situation has a  number of common points with the uniformizable spherical CR structure constructed by Schwartz in \cite[Theorems 1.1 and 1.2]{schwartz_real_2003}. The statement of Schwartz corresponds to a particular representation of the $(4,4,4)$-triangle group, where the images of the standard generators $I_1$, $I_2$ and $I_3$ satisfy $(I_1 I_2 I_1 I_3)^7 = \mathrm{Id}$. If $\Gamma_{(4,4,4,7)}$ is the corresponding subgroup of $\mathrm{PU}(2,1)$ with limit set $\Lambda_{\Gamma_{(4,4,4,7)}}$ and set of discontinuity $\Omega_{\Gamma_{(4,4,4,7)}}$, part of the result can be restated as follows:
 
 \begin{thm}[Schwartz \cite{schwartz_real_2003}] We have that
 	\begin{itemize}
 		\item The group $\Gamma_{(4,4,4,7)}$ is discrete.
 		\item The quotient $\Gamma_{(4,4,4,7)} \backslash \Omega_{\Gamma_{(4,4,4,7)}}$ is a compact orbifold, which is finitely covered by a hyperbolic 3-manifold.
 		\item The limit set $\Lambda_{\Gamma_{(4,4,4,7)}}$ is connected and the closure in $\dhc{2}$ of a countable union of $\RR$-circles.
 		\end{itemize}
 \end{thm}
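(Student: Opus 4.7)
The plan is to follow the classical strategy Schwartz pioneered in his series of papers on complex hyperbolic triangle groups, adapted here to the $(4,4,4)$-group with the additional accidental relation $(I_1 I_2 I_1 I_3)^7 = \mathrm{Id}$. The overall architecture has three parts: (i) discreteness via a fundamental polyhedron, (ii) identification of the compactification $\Gamma \backslash \Omega$ via its combinatorial cusp/cone structure, and (iii) description of the limit set via invariant $\RR$-Fuchsian subgroups.

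For discreteness I would construct an explicit fundamental polyhedron in $\hc{2}$ whose faces are supported on bisectors of pairs of $\Gamma$-conjugate points. The natural candidates are the bisectors equidistant from a chosen base point and its images under the generating involutions and under the word $W = I_1 I_2 I_1 I_3$. I would then apply a version of the Poincaré polyhedron theorem in complex hyperbolic geometry (as developed by Mostow, Deraux--Falbel--Paupert, and others) to verify that side pairings and cycle/tiling conditions realize exactly the abstract group with the relation $W^7 = \mathrm{Id}$; this simultaneously gives discreteness and a presentation of $\Gamma_{(4,4,4,7)}$.

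For the quotient of $\Omega$, I would analyze how the polyhedron closes up at its ideal boundary. Because $W$ becomes elliptic of order $7$, the representation has no parabolics, so the boundary trace of the polyhedron should assemble into a compact 3-dimensional cell complex. The claim that this orbifold is finitely covered by a hyperbolic 3-manifold would be established by identifying the underlying space, via its combinatorial description, with a specific Dehn filling of a known cusped hyperbolic manifold (in Schwartz's case, a surgery on the Whitehead link complement) and then invoking the hyperbolic structure on that filling. For the limit set, I would exhibit an $\RR$-Fuchsian subgroup $H < \Gamma_{(4,4,4,7)}$: the $(4,4,4)$-triangle group embeds in $\mathrm{PO}(2,1) \subset \mathrm{PU}(2,1)$, and one can arrange a conjugate copy so that its limit set is an $\RR$-circle $C \subset \Lambda_\Gamma$. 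The $\Gamma$-orbit of $C$ gives a countable family of $\RR$-circles inside $\Lambda_\Gamma$; since $\Gamma$ acts minimally on $\Lambda_\Gamma$, the closure of this orbit is all of $\Lambda_\Gamma$. Connectedness would follow from showing that two $\RR$-circles in adjacent orbit elements already intersect (e.g.\ at fixed points of common elliptic elements), propagating connectedness through the whole orbit before taking closure.

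I expect the main obstacle to be the Poincaré polyhedron verification, since bisector intersections in $\hc{2}$ are neither totally geodesic nor combinatorially as clean as polyhedra in $\HH_\RR^n$: checking tiling and cycle conditions requires delicate spine analysis and explicit matrix computation, and this is precisely the technically hard core of Schwartz's original argument. A secondary difficulty is ruling out that the closure of the countable $\RR$-circle union is strictly smaller than $\Lambda_\Gamma$; this needs a genuine density argument, typically by showing that the $\Gamma$-orbit of any point in $\Lambda_\Gamma$ accumulates on some $\RR$-circle of the family.
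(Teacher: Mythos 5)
First, a point of order: this theorem is not proved in the paper at all --- it is quoted from Schwartz \cite{schwartz_real_2003} purely for comparison with the main results about $\Gamma_6$, so there is no internal proof to measure your attempt against. Judged on its own terms, your proposal is a program rather than a proof: every hard step is announced (``I would construct\dots'', ``I would then apply\dots'') and none is carried out. The technically decisive content of Schwartz's theorem --- discreteness and the identification of the compact quotient --- is exactly the part you defer, and your proposed route for it is not the one that works. Schwartz does \emph{not} build a Dirichlet-type polyhedron bounded by bisectors for this group; the whole point of \cite{schwartz_real_2003} is that bisector intersections are unmanageable here, and he instead constructs $\Gamma$-invariant families of $\RR$-circles and bounds a fundamental domain for the action on $\Omega$ by hybrid ``$\RR$-spheres'' built from arcs of $\RR$-circles. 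Your sketch also misidentifies the $\RR$-Fuchsian subgroup: the representation of the full $(4,4,4)$-triangle group here is not $\RR$-Fuchsian (the word $I_1I_2I_1I_3$ is elliptic of order $7$, and the limit set is not a single $\RR$-circle), so you cannot ``arrange a conjugate copy'' of the $\mathrm{PO}(2,1)$-embedded triangle group inside $\Gamma_{(4,4,4,7)}$; the subgroup stabilizing an $\RR$-circle of $\Lambda$ is a specific proper subgroup that has to be exhibited and whose limit set has to be shown to be the whole $\RR$-circle (compare how the present paper does this for $\Gamma_6$ via the string of beads and \Cref{prop:limit_set_R_circle}).

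The parts of your outline concerning the third bullet are essentially sound and, not coincidentally, mirror the strategy this paper uses for $\Gamma_6$ in \Cref{sect:r_circle_limit_set}: once one $\RR$-circle $C\subset\Lambda_\Gamma$ is found, minimality of $\Lambda_\Gamma$ as a closed nonempty invariant set immediately gives $\Lambda_\Gamma=\overline{\Gamma C}$, a closure of countably many $\RR$-circles, and connectedness follows by checking that $C$ meets $gC$ for each generator $g$ and chaining. Note that your closing worry about ``ruling out that the closure is strictly smaller than $\Lambda_\Gamma$'' is a non-issue --- the minimality argument you already invoked settles it --- whereas the genuine obstacles (producing the invariant $\RR$-circle in the first place, and the discreteness/compactness statements) remain untouched.
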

 
 We believe that the results stated for $\Gamma_6$ also hold for the groups $\Gamma_{3m}$, for $m \geq 2$, defined either as images of representations of $(3,3,3m)$-triangle groups or by the condition $U^{3m} = \mathrm{Id}$ in the Parker-Will parametrization. See \Cref{sect:the_uniformization} for more details on the definition of the groups. In these cases, we still have the string of beads used in \Cref{sect:r_circle_limit_set}, and the proofs should be analogous but with more tedious computations. 
 
 \paragraph{Outline of the article}
 The article is organized as follows. In \Cref{sect:geom_background}, we recall the geometric background of the problem and fix notation for the complex hyperbolic plane $\hc{2}$, its boundary at infinity $\dhc{2}$ and some objects used in the latter constructions. We also prove a result about the shape of uniformizable cusps in spherical CR geometry. 
 In \Cref{sect:the_uniformization}, we briefly describe the manifold that we consider, as well as the subgroup of $\mathrm{SU}(2,1)$ that uniformizes it in $\dhc{2}$. 
 Then, in \Cref{sect:r_circle_limit_set}, we find an $\RR$-circle in the limit set of the group by considering a string of beads, and we prove some consequences of this fact, namely that the limit set is connected, contains a Hopf link and that the fundamental group of its complement is not finitely generated.

\section{Geometric background}\label{sect:geom_background}
 In this section, we will describe briefly a geometric background on the complex hyperbolic plane and its boundary at infinity, the isometry group of this space and the bisectors and spinal spheres, which are the geometric objects that appear in the construction of fundamental domains. For a more complete description, see the book of Goldman \cite{goldman}. We also prove, in Subsection \ref{subsect:cusps_are_horocusps}, a result on the shape of rank-one cusps for spherical CR structures.
 
\subsection{The complex hyperbolic plane and its boundary at infinity}
 Let $V$ be the complex vector space $\CC^3$ endowed with the Hermitian product $\langle \cdot , \cdot \rangle$ given by
 \begin{equation*}
 	\langle z,w \rangle = \conjug{z_1} w_3 + \conjug{z_2} w_2 + \conjug{z_3} w_1
 \end{equation*}
where $z = \left(
\begin{smallmatrix*}
z_1 \\ z_2 \\ z_3
\end{smallmatrix*} 
\right)$
and 
$w = \left(
\begin{smallmatrix*}
w_1 \\ w_2 \\ w_3
\end{smallmatrix*} 
\right)$
 belong to $\CC^3$.
 Let $\Phi$ be the corresponding Hermitian form, of signature $(2,1)$, with matrix
 \begin{equation*}
 	\begin{pmatrix}
 	0 & 0 & 1 \\
 	0 & 1 & 0 \\
 	1 & 0 & 0
 	\end{pmatrix}.
 \end{equation*}
 
 Let $\mathrm{U}(2,1)$ be the unitary group for the Hermitian form $\Phi$, $\mathrm{SU}(2,1)$ its intersection with $\mathrm{SL}_3(\CC)$ and $\mathrm{PU}(2,1)$ its projectivization. In this article, we will use usual brackets to write elements of $\CC^3$ or in a linear group, and square brackets for their projections on $\cp{2}$ and the corresponding projectivized group. For example, if $U \in \mathrm{SU}(2,1)$, then $[U] \in \mathrm{PU}(2,1)$. This last element has exactly three lifts in $\mathrm{SU}(2,1)$, namely $U$, $e^{2i\pi/3} U$ and $e^{-2i\pi/3} U$.
 The complex hyperbolic plane $\hc{2}$ is defined as $\mathbb{P}(\{z \in V \mid \Phi(z)<0 \}) \subset \cp{2}$. The Hermitian form $\Phi$ induces a Riemannian metric on $\hc{2}$ with pinched negative sectional curvature $-1 \leq \kappa \leq -\frac{1}{4}$.
 The boundary at infinity of $\hc{2}$ is the set $\dhc{2} = \mathbb{P}(\{z \in V\setminus \{0\} \mid \Phi(z)=0 \})$.
 In this article, we will use a particular chart to describe $\hc{2}$ and $\dhc{2}$; it is called the \emph{Siegel model} and paramatrizes the spaces as follows:
 \begin{align*}
 	\hc{2} &= \left\{
 	\begin{bmatrix}
 	-\frac{1}{2}(\abs{z}^2 + w)\\
 	z \\
 	1
 	\end{bmatrix}
 	\mid (z,w)\in \CC^2 \text{ and } \Re(w) > 0
 	\right\}, \\
 	\dhc{2} &= \left\{
 	\begin{bmatrix}
 	-\frac{1}{2}(\abs{z}^2 + it)\\
 	z \\
 	1
 	\end{bmatrix}
 	\mid (z,t) \in \CC \times \RR
 	\right\} 
 	\cup 
 	\left\{
 	\begin{bmatrix}
 	1\\
 	0 \\
 	0
 	\end{bmatrix}
 	\right\}.
 \end{align*}
 Thus, the space $\hc{2}$ is homeomorphic to a ball $B^4$, and its boundary at infinity $\dhc{2}$ to a sphere $S^3$, that we identify with $\CC \times \RR \cup \{\infty\}$ in the Siegel model.
 
 The group of holomorphic isometries of $\hc{2}$ is $\mathrm{PU}(2,1)$, and acts transitively on $\hc{2}$ and on $\dhc{2}$. In the same way as for real hyperbolic isometries, an element of $\mathrm{PU}(2,1)$ is \emph{elliptic} if it has a fixed point in $\hc{2}$, \emph{parabolic} if it is not elliptic and has a unique fixed point in $\dhc{2}$ and \emph{loxodromic} otherwise.
 Among the parabolic elements, the \emph{unipotent} ones are precisely those whose lifts in $\mathrm{SU}(2,1)$ wave a triple eigenvalue. We will also use these terms for elements of $\mathrm{SU}(2,1)$, depending on the type of their projection in $\mathrm{PU}(2,1)$.
 
 Given a discrete subgroup $\Gamma < \mathrm{PU}(2,1)$, the \emph{limit set of $\Gamma$} is the set of accumulation points of one (or equivalently any) $\Gamma$-orbit in $\hc{2}$. We will denote this set by $\Lambda_\Gamma$. Thus, $\Lambda_\Gamma$ is a closed, $\Gamma$-invariant subset of $\dhc{2}$. Indeed, it is the smallest closed non-empty $\Gamma$-invariant subset of $\dhc{2}$.
 The complement of $\Lambda_\Gamma$ in $\dhc{2}$ is called the \emph{set of discontinuity} of $\Gamma$, and is denoted $\Omega_\Gamma$. It is the largest open set on which $\Gamma$ acts properly. When the action of $\Gamma$ has no fixed points in $\Omega_\Gamma$, the quotient $\Gamma \backslash \Omega_\Gamma$ is a manifold.

\subsection{Some geometric objects}
 We will use some geometric objects related to the complex hyperbolic plane and its boundary at infinity. First, we focus on the totally geodesic subspaces of $\hc{2}$. Of course, points, geodesics and $\hc{2}$ are totally geodesic. However, there is no totally geodesic subspace of dimension $3$, and there are two types of totally geodesic subspaces of dimension $2$.
 On the one hand, there are the complex geodesics, which are the intersections of complex lines of $\cp{2}$ with $\hc{2}$; they are isometric to $\hc{1}$ and have constant sectional curvature equal to $-1$. On the other hand, there are the real planes, defined as intersections of $\hc{2}$ totally real subspaces of $\cp{2}$. They are copies of $\mathbb{H}^2_\RR$, but the induced distance is rescaled, so they have constant sectional curvature equal to $-\frac{1}{4}$. The group $\mathrm{PU}(2,1)$ acts transitively on each type of subspace.
 
 The boundary at infinity of complex geodesics and real planes are smooth circles in $\dhc{2}$, called \emph{$\CC$-circles} and \emph{$\RR$-circles} respectively. Two of these circles are linked if and only if the corresponding subspaces intersect in $\hc{2}$.
 
 Since there is no totally geodesic hypersurface in $\hc{2}$, we need to consider another kind of geometric objects in order to bound domains. A possible class of objects, that arise naturally when studying Dirichlet domains, are bisectors, which are equidistant surfaces. More precisely, if $[z_1],[z_2] \in \hc{2}$, the \emph{bisector of $[z_1]$ and $[w_2]$} is defined as 
 \[\mathfrak{B}([z_1],[z_2]) = \{ [w] \in \hc{2} \mid d([z_1],[w]) = d([z_2] , [w]) \}.\]
 If $z_1$ and $z_2$ are lifts of $[z_1]$ and $[z_2]$ in $\CC^3$ such that $\Phi(z_1) = \Phi(z_2)$, we define the same object by
 \[\mathfrak{B}(z_1,z_2) = \{[w]\in \hc{2} \mid 
 \abs{\langle z_1 , w \rangle} = 
 \abs{\langle z_2 , w \rangle} \}.\]

 A bisector is homeomorphic to a ball $B^3$. For a more detailed description, see again \cite[Chapters 5, 8 and 9]{goldman}. The boundary at infinity of a bisector is homeomorphic to a sphere $S^2$ in $\dhc{2}$, and is called a \emph{spinal sphere}. For $z,w \in \CC^3$ as before, the corresponding spinal sphere is defined by
 \[\mathfrak{S}(z_1,z_2) = \{[w]\in \dhc{2} \mid 
 \abs{\langle z_1 , w \rangle} = 
 \abs{\langle z_2 , w \rangle} \}.\]
 
 We will consider domains in $\hc{2}$ bounded by bisectors, and their boundaries at infinity, that are bounded by spinal spheres. In general, the intersections and tangencies of bisectors and spinal spheres can be complicated, as Goldman shows in \cite[Chapter 9]{goldman}. However, since we will only consider a Dirichlet domain and bisectors equidistant from a single point, the intersections will be connected.

\subsection{Spherical CR structures and uniformizations}
We will use the language of $(G,X)$-structures in order to work with geometric structures in this article, in the sense given for example by Thurston in \cite{gt3m}.
Given a model space $X$ and a group $G$ acting transitively and analytically on $X$, a $(G,X)$-structure on a manifold $M$ is an atlas of $M$ with values in $X$ and with transition maps given by elements of $G$.
 Equivalently, a $(G,X)$-structure on $M$ can be seen as a pair $(\dev,\rho)$ of a developing map and a holonomy representation, where $\rho: \pi_1(M) \to G$ is a representation and $\dev: \widetilde{M} \to X$ is a $\rho$-equivariant local diffeomorphism, in the sense that for all $\gamma \in \pi_1(M)$ and all $x \in \widetilde{M}$, $\dev(\gamma x) = \rho(\gamma) \dev(x)$.
 In this article we will use both definitions, depending on the points that we want to highlight.
 We will focus here on spherical CR structures, that are defined as follows  

\begin{defn}
	A spherical CR structure on a $3$-manifold $M$ is a $(G,X)$-structure where $X = \dhc{2}$ and $G = \mathrm{PU}(2,1)$.
\end{defn}

 We say that a spherical CR structure on $M$ is \emph{uniformizable} if $\rho(\pi_1(M)) = \Gamma$ is a discrete subgroup of $\mathrm{PU}(2,1)$ with set of discontinuity $\Omega_\Gamma \subset \dhc{2}$, the manifold $M$ is diffeomorphic to $\Gamma \backslash \Omega_\Gamma$ and the spherical CR structure on $M$ is given by this quotient.
 This type of structures is of particular interest, since all the information is contained in the group $\Gamma$.

\subsection{Rank-one spherical CR cusps are horocusps}\label{subsect:cusps_are_horocusps}
Consider a cusped hyperbolic $3$-manifold $M$, and $C$ an open cusp neighborhood bounded by a horosphere, homeomorphic to $T \times \RR^+$, where $T$ is the corresponding peripheral torus.
Choose, once and for all, $\widetilde{C} \subset \widetilde{M}$ to be a copy of the universal cover of $C$ in the universal cover of $M$. By choosing this lift, we obtain an injection of the peripheral group $\pi_1(T) \simeq \pi_1(C) \hookrightarrow \pi_1(M)$.
If $(\dev,\rho)$ is a spherical CR structure on $M$, its \emph{peripheral holonomy} is the restriction of $\rho$ to $\pi_1(T)$.

 If $M$ is one of the cusped hyperbolic manifolds for which a spherical CR uniformization is known, then the image corresponding peripheral holonomy is generated by a single parabolic element.
 In the cases of the Figure Eight knot complement and the Whitehead link complement, the spherical CR cusp has the shape of a \emph{horotube} quotiented by a parabolic element. This follows from the work of Deraux in \cite{deraux_1parameter_2016} for the Figure Eight knot complement and Parker and Will in \cite{parker_complex_2017a} as well as Schwartz in \cite{schwartz} for two different uniformizations of the Whitehead link complement. 
 Recall that, following Schwartz in \cite{schwartz}, a \emph{horotube} and a \emph{horocusp} are defined as follows.
\begin{defn}
	Let $P \in \mathrm{PU}(2,1)$ be a parabolic element with fixed point $[p] \in \dhc{2}$. A \emph{$P$-horotube} is a $P$-invariant open subset $H$ of $\dhc{2} \setminus \{[p]\}$ such that $\langle P \rangle \backslash H$ has a compact complement in $\langle P \rangle \backslash (\dhc{2}) \setminus \{[p]\}$.
	A \emph{$P$-horocusp} is a quotient of the form $\langle P \rangle \backslash H$ where $H$ is a $P$-horotube.
\end{defn}

 In this subsection, we are going to prove that in a spherical CR uniformization, the shape of cusps is always a horocusp, as long as the image of the peripheral holonomy is generated by a single parabolic element. In order to prove this fact, we need a technical lemma, stating that an embedded $P$-invariant cylinder always bounds a horotube.

\begin{lemme}\label{lemma:cylinder_bounds_horotube}
	Let $[P] \in \mathrm{PU}(2,1)$ be a parabolic element with fixed point $[p] \in \dhc{2}$, and let $L \subset \dhc{2}$ be an embedded cylinder invariant by $[P]$. Then $L$ cuts $\dhc{2}\setminus \{[p]\}$ into two connected components $H$ and $K$, where $H$ is a horotube and $\langle P \rangle \backslash (K \cup L)$ is compact.
\end{lemme}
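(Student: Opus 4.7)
The plan is to push $L$ down to the quotient manifold $M_P := \langle P \rangle \backslash (\dhc{2} \setminus \{[p]\})$ and work with the resulting compact surface there. Since every parabolic element of $\mathrm{PU}(2,1)$ has infinite order (torsion elements being elliptic), $\langle P \rangle \cong \ZZ$ acts freely on $\dhc{2} \setminus \{[p]\}$, with $[p]$ being its unique fixed point in $\dhc{2}$; working in a Heisenberg chart centered at $[p]$, one checks the action is properly discontinuous. Thus $M_P$ is an orientable $3$-manifold whose universal cover $\dhc{2} \setminus \{[p]\} \cong \RR^3$ is contractible, so $M_P$ is a $K(\ZZ,1)$; in particular $H_2(M_P;\ZZ/2) = 0$, and a fundamental-domain argument in Heisenberg coordinates (a transverse slab to the $P$-orbits) shows that $M_P$ is non-compact with exactly one end.

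Let $\pi : \dhc{2} \setminus \{[p]\} \to M_P$ be the projection. Since $L$ is $\langle P \rangle$-invariant and $\langle P \rangle$ acts freely and properly on it, $T := \pi(L)$ is a compact embedded surface in $M_P$ and $\pi|_L : L \to T$ is an infinite cyclic cover. The vanishing of $H_2(M_P;\ZZ/2)$ forces $T$ to be separating, hence two-sided; since $M_P$ is orientable, $T$ is orientable too. The short exact sequence $1 \to \ZZ \to \pi_1(T) \to \ZZ \to 1$ obtained from the cyclic cover, together with orientability, pins $\pi_1(T)$ down to $\ZZ^2$, so $T$ is a torus.

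Now $M_P \setminus T$ has exactly two connected components, say $H'$ and $K'$. Because $T$ is compact and $M_P$ has a single end, one can choose a compact set $C \supset T$ with $M_P \setminus C$ connected and non-compact; this $M_P \setminus C$ lies in one of the two components, say $H'$, which is therefore unbounded, while $K' \subset C$ is relatively compact and $\overline{K'} = K' \cup T$ is compact. Setting $H = \pi^{-1}(H')$ and $K = \pi^{-1}(K')$, the two sets are $\langle P \rangle$-invariant and their disjoint union with $L$ is $\dhc{2} \setminus \{[p]\}$. By construction $\langle P \rangle \backslash H = H'$ has compact complement $K' \cup T$ in $M_P$, exhibiting $H$ as a $P$-horotube, while $\langle P \rangle \backslash (K \cup L) = K' \cup T$ is compact.

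The main obstacle is the preliminary topological analysis of $M_P$: namely, the vanishing of $H_2(M_P;\ZZ/2)$ and the fact that $M_P$ has exactly one end. Both reduce to the contractibility of $\dhc{2} \setminus \{[p]\}$ and to a concrete picture of a fundamental domain in Heisenberg coordinates; once they are in hand, the identification of $T$ as a torus and the extraction of the bounded and unbounded sides are standard three-manifold arguments.
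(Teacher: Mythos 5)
Your overall strategy is the same as the paper's: separate $\dhc{2}\setminus\{[p]\}$ along $L$, then use the quotient $M_P\simeq S^1\times\RR^2$ and its single end to decide which side is the horotube and which side is relatively compact. The one genuine difference is where the separation is performed: the paper applies the Jordan--Brouwer theorem for properly embedded surfaces \emph{upstairs} in $\dhc{2}\setminus\{[p]\}\simeq\RR^3$ and only then passes to the quotient, whereas you separate \emph{downstairs} using $H_2(M_P;\ZZ/2)=0$. Your variant is perfectly workable and arguably cleaner homologically, but it leaves two points unaddressed. First, the compactness of $T=\pi(L)$ is asserted rather than proved; it does hold (a free properly discontinuous $\ZZ$-action on the open cylinder yields a quotient surface whose fundamental group is a $\ZZ$-by-$\ZZ$ extension, hence $\ZZ^2$ or the Klein bottle group, neither of which is free, so the quotient is closed), but a sentence to this effect is needed since everything downstream depends on $T$ being a closed surface.

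Second, and more importantly, the lemma claims that $L$ cuts $\dhc{2}\setminus\{[p]\}$ into \emph{two connected components}, and your $H=\pi^{-1}(H')$ and $K=\pi^{-1}(K')$ are preimages of connected sets under an infinite cyclic covering, which need not be connected in general. This connectedness is actually used later in the paper (in the proof that cusps are horocusps, where $\dev(\widetilde N)$ is argued to lie in one of the two components), so it cannot be dropped. The gap is fillable: each component of $\pi^{-1}(H')$ maps onto $H'$ under the covering, hence meets the preimage of a collar $T\times(0,1)\subset H'$, and that preimage is $L\times(0,1)$, which is connected; therefore $\pi^{-1}(H')$ is connected, and likewise for $K'$. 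Alternatively, do as the paper does and invoke Jordan--Brouwer upstairs first, which hands you exactly two components of $\dhc{2}\setminus(L\cup\{[p]\})$ from the start; your downstairs separation then shows these two components are not interchanged by $P$ and project to $H'$ and $K'$.
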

\begin{proof}
	Since $L$ is a closed connected surface in $\dhc{2} \setminus \{[p]\} \simeq \RR^3$, we can apply the Jordan-Brouwer separation theorem (as stated for example in \cite[Theorem 4.16]{montiel_curves_2009}). Hence, $\dhc{2} \setminus (L \cup \{[p]\})$ has at exactly two connected components.
	
	Since $\langle [P] \rangle  \backslash \dhc{2} \setminus \{[p]\} \simeq S^1 \times \RR^2$ and the projection of $C$ is an embedded torus, there is only one unbounded connected component in the quotient, so the two components are a $P$-horotube $H$ and a component $K$ such that $\langle P \rangle \backslash (K \cup L)$ is compact.
\end{proof}

\begin{prop}\label{prop:cusps_are_horocusps}
	Let $(\dev,\rho)$ be a spherical CR uniformization of $M$ such that $\rho(\pi_1(T))$ is a parabolic subgroup generated by a horizontal unipotent element $P$.
	Then, there exists a $P$-horotube $H$ such that the spherical CR structure on $C$ is given by $\langle P \rangle \backslash H$.
\end{prop}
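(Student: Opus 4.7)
The plan is to apply \Cref{lemma:cylinder_bounds_horotube} to the lift of the peripheral torus $T=\partial C$, and then identify the resulting horotube with the lift of $C$. Writing $\pi\colon\Omega_\Gamma\to M\cong\Gamma\backslash\Omega_\Gamma$ for the covering map, I take the connected component $\widetilde C$ of $\pi^{-1}(C)$ whose stabilizer in $\Gamma$ equals $\rho(\pi_1(T))=\langle P\rangle$, so that $\langle P\rangle\backslash\widetilde C\cong C$. The component $L$ of $\pi^{-1}(T)$ sitting in the frontier of $\widetilde C$ is the cover of $T\simeq\ZZ^2\backslash\RR^2$ associated to the rank-one subgroup $\langle P\rangle$, hence topologically a cylinder, embedded in $\Omega_\Gamma\subset\dhc{2}\setminus\{[p]\}$ and $P$-invariant.

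Applying \Cref{lemma:cylinder_bounds_horotube} to $L$ yields a $P$-horotube $H$ and a complementary piece $K$ with $\langle P\rangle\backslash(K\cup L)$ compact. Because $\widetilde C$ is open, connected, and disjoint from $L$ and $\{[p]\}$, it lies entirely in one of the two components; the non-compactness of $\langle P\rangle\backslash\widetilde C\cong C\cong T\times\RR^+$ excludes containment in $K$, so $\widetilde C\subseteq H$.

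The remaining step is to upgrade this to the equality $\widetilde C=H$, which I would do by showing that $\widetilde C$ is closed in $H$: combined with its openness and with the connectedness of $H$, this yields equality. Concretely, I claim $\overline{\widetilde C}=\widetilde C\cup L\cup\{[p]\}$ in $\dhc{2}$. For a limit point $x$ of $\widetilde C$, either $x\in\Omega_\Gamma$---in which case $\pi$ is a local homeomorphism near $x$ and $\pi(x)\in\overline C=C\cup T$ forces $x$ to lie in $\widetilde C$ or on $L$ (the only component of $\pi^{-1}(T)$ approached by $\widetilde C$)---or $x\in\Lambda_\Gamma$, in which case the properness of the $\Gamma$-action on $\Omega_\Gamma$ combined with the identification $\mathrm{Stab}_\Gamma(\widetilde C)=\langle P\rangle$ forces the projection $\pi(x_n)$ of any approximating sequence $x_n\in\widetilde C$ to escape every compact of $M$; since this escape happens from inside the cusp neighborhood $C\cong T\times\RR^+$, it must proceed through the cusp end, and this end lifts in $\widetilde C$ exactly to the parabolic fixed point $[p]$, whence $x=[p]$. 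Intersecting with $H$ then gives $\overline{\widetilde C}\cap H=\widetilde C$, as needed. The main obstacle in the plan is precisely this last translation between the dynamical behaviour of $\widetilde C$ near $\Lambda_\Gamma$ and the topology of the hyperbolic cusp end; carrying it out carefully---using $P$-equivariance and the correspondence between the unique end of $\widetilde C$ and its parabolic stabilizer---is where the technical work lies.
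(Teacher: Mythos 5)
Your setup coincides with the paper's: lift the peripheral torus to a $P$-invariant embedded cylinder $L$, apply \Cref{lemma:cylinder_bounds_horotube} to split $\dhc{2}\setminus\{[p]\}$ into $H$ and $K$, and then argue that the lift of $C$ is exactly $H$. But the two steps that carry all the weight are not actually proved.

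First, ``the non-compactness of $\langle P\rangle\backslash\widetilde C\cong T\times\RR^+$ excludes containment in $K$'' is a non sequitur: a non-compact set can perfectly well sit inside a set with compact quotient (as $[0,1)$ sits inside $[0,1]$). If $\widetilde C\subseteq K$, the only immediate consequence is that the end of $\widetilde C$ accumulates on $\Lambda_\Gamma\cap K$; to get a contradiction you must first know that $\Lambda_\Gamma\cap K=\emptyset$, and nothing in your argument addresses where $\Lambda_\Gamma$ lies. Second, your proof that $\overline{\widetilde C}=\widetilde C\cup L\cup\{[p]\}$ rests on the assertion that the cusp end of $C$ ``lifts in $\widetilde C$ exactly to the parabolic fixed point $[p]$.'' That assertion is essentially the statement being proved: a priori the single end of the cyclic cover $\widetilde C$ could accumulate on a large subset of $\Lambda_\Gamma$ (this is exactly what happens for lifts of ends in geometrically infinite Kleinian groups), and there is no a priori link between the hyperbolic cusp end and the CR developing map. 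You flag this as ``where the technical work lies,'' but that work is the content of the proposition, so the argument is circular as written. (Also, the general principle you invoke --- that a sequence in $\Omega_\Gamma$ converging to a point of $\Lambda_\Gamma$ must project off every compact of $M$ --- is false for the full quotient; it only becomes usable after restricting to a single component with stabilizer $\langle P\rangle$ and analyzing the parabolic dynamics of $P^{k_n}$, which again lands you at the unproved claim.)

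The missing ingredient, which the paper supplies, is a localization of the limit set: one considers the $\Gamma$-invariant set $\widetilde N=\pi^{-1}(M\setminus\overline C)$, observes that it and $\widetilde C$ lie in opposite components of the complement of $L$, and then shows $\Lambda_\Gamma\setminus\{[p]\}$ lies in the \emph{same} component as $\widetilde N$ --- using that the $\Gamma$-orbit of $[p]$ is dense in $\Lambda_\Gamma$ and that for a conjugate $[Q]$ of $[P]$ with fixed point $[q]$ one has $[Q]^n[x]\to[q]$ for $[x]\in\widetilde N$, so $[q]$ cannot lie in the open component missing $\widetilde N$. From this, if $\widetilde N\subseteq H$ then $K\subseteq\Omega_\Gamma$ and $\overline C$ would be the continuous image of the compact $\langle P\rangle\backslash(K\cup L)$, contradicting non-compactness of the cusp; hence $\widetilde N\subseteq K$, $H\subseteq\Omega_\Gamma$, and $H$, being connected, open, and disjoint from $\pi^{-1}(M\setminus\overline C)$ and $\pi^{-1}(T)$, must equal a single translate of $\widetilde C$. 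You need some version of this limit-set argument to close both gaps.
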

\begin{proof}
	Maybe after choosing a smaller neighborhood $C$, we can suppose that for all $\gamma \in \pi_1(M)$ $\gamma \widetilde{C} \cap \widetilde{C} \neq \emptyset \iff \gamma \in \pi_1(T)$.
	Our goal is to prove that $\dev(\widetilde{C})$ is a horotube $H$, and that the spherical CR structure on $C$ is given by $\langle [P] \rangle \backslash H$.
	
	First, we prove that the structure of $C$ is given by $\langle [P] \rangle \backslash \dev(\widetilde{C})$.
	Since $(\dev,\rho)$ is a uniformization of $M$, we know that the structure on $C$ is given by $\Gamma \backslash \dev(\widetilde{C})$.
	Let $\gamma \in \pi_1(M)$ such that $\rho(\gamma) \dev(\widetilde{C}) \cap \dev(\widetilde{C}) \neq \emptyset$.
	Let $x_0 \in \widetilde{C}$ such that $\rho(\gamma) \dev(x_0) \in \dev(\widetilde{C})$.
	Since $\dev(\widetilde{C})$ is path-connected and $x_0$ and $\gamma x_0$ are lifts of the same point in $M$, there is a path in $\dev(\widetilde{C})$ from $\dev(x_0)$ to $\dev(\gamma x_0)$, that lifts to a path in $\widetilde{C}$ representing an element $\sigma \in \pi_1(T)$.
	Hence, $\rho(\sigma^{-1}\gamma)$ fixes $\dev(x_0)$. Since the actions are continuous, the same is true for points in a neighborhood of $x_0$, so $\sigma^{-1}\gamma \in \ker(\rho)$. Thus, the stabilizer of $\dev(\widetilde{C})$ in $\Gamma$ is $\rho(\pi_1(T))$, so the spherical CR structure on $C$ is given by $\rho(\pi_1(T)) \backslash \dev(\widetilde{C}) = \langle [P] \rangle \backslash \dev(\widetilde{C})$.
	
	It only remains to prove that $\dev(\widetilde{C})$ is a horotube.
	Choose a basis $(l,m)$ of $\pi_1(C)$ such that $m$ generates $\ker(\rho |_{\pi_1(T)})$ and $\rho(l) = [P]$. Since the structure on $M$ is a uniformization, $\dev$ induces an embedding of the cylinder $\langle m \rangle \backslash \partial \widetilde{C}$ into $S^3$. By the equivariance of $\dev$, the cylinder is $[P]$-invariant, so, by \Cref{lemma:cylinder_bounds_horotube}, $\dev(\partial\widetilde{C})$ cuts $\dhc{2} \setminus \{[p]\}$ into two connected components $H$ and $K$, where $H$ is a $[P]$-horotube and $\langle [P] \rangle \backslash (K\cup \dev(\partial\widetilde{C}))$ is compact.
	
	Now, let $\widetilde{N}$ be the full pre-image of $M \setminus \overline{C}$ in $\widetilde{M}$. By the choice of $C$, $\widetilde{N}$ is connected, and disjoint from all the lifts of $C$ in $\widetilde{M}$. Therefore, $\dev(\widetilde{N}) \subset H$ or $\dev(\widetilde{N}) \subset K$. Since $\widetilde{C}$ is connected as well, $\dev(\widetilde{C})$ is contained in the other connected component.
	Note that $\Lambda_\Gamma$ must be contained in the same component as $\dev(\widetilde{N})$.
	Otherwise, since the $\Gamma$-orbit of $[p]$ is dense in $\Lambda_\Gamma$, there would be $[Q] \in \Gamma$ conjugated to $[P]$ with fixed point $[q]$ in the other component. If $[x] \in \dev(\widetilde{N})$, then $[Q]^n [x] \to [q]$, so there would exist $n\in \NN$ such that $[Q]^n[x]$ is in the connected component that does not intersect $\dev(\widetilde{N})$, leading to a contradiction.
	
	Suppose, by contradiction, that $\dev(\widetilde{N}) \subset H$.
	Then, the limit set $\Lambda_\Gamma$ must be contained in $H$, and $K \subset \Omega$.
	Let $\pr: \Omega \to \Gamma \backslash \Omega \simeq M$ be the natural projection.
	Since $\dev(\widetilde{C}) \subset \Omega$, we know that $C \subseteq \pr(K)$, and since $K \cap \pr^{-1}(N) = \emptyset$, we also have $\pr(K) \subseteq C$. Thus, $C \simeq \pr(K) \simeq \Gamma \backslash K$. But $\Gamma \backslash K$ is a quotient of $\langle P \rangle \backslash K$, which is relatively compact by \Cref{lemma:cylinder_bounds_horotube}. The fact that $C$ is not contained in any compact set leads to a contradiction.
	
	Therefore, $\dev(\widetilde{N}) \subset K$, and $\dev(\widetilde{C}) \subset H \subset \Omega$. Since $\dev(\widetilde{N}) \cap H = \emptyset$, $ H = \bigcup_{\gamma \in \Gamma} \dev(\gamma \widetilde{C})$. But $H$ is connected and $\dev(\widetilde{C})$ is either equal or disjoint from its image by an element of $\Gamma$, so $\dev(\widetilde{C}) = H$.

\end{proof}

\begin{cor}
	The spherical CR Dehn surgery theorem of \cite{acosta_spherical_2016} can be applied for all the uniformizazions of cusped hyperbolic manifolds given in \cite{parker_wang_xie}.
\end{cor}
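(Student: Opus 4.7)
The plan is to reduce the corollary to a direct verification that the hypotheses of \Cref{prop:cusps_are_horocusps} hold for each of the uniformizations produced in \cite{parker_wang_xie}. The spherical CR Dehn surgery theorem of \cite{acosta_spherical_2016} takes as input a uniformizable spherical CR structure on a cusped 3-manifold whose cusps have the shape of horocusps $\langle P\rangle \backslash H$ with $P$ horizontal unipotent, and produces uniformizations of the nearby Dehn fillings. So the only thing to check is that every uniformization in \cite{parker_wang_xie} fits this template.

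To do this I would go cusp by cusp through the list in \cite{parker_wang_xie}. For each uniformizing representation $\rho$ produced there, I would inspect the peripheral subgroup attached to every cusp and verify two points: first, that $\rho(\pi_1(T))$ is generated, modulo its kernel, by a single parabolic element; and second, that this parabolic is unipotent, and indeed horizontal in the Heisenberg sense (i.e.\ one of the standard horizontal unipotent matrices in $\mathrm{SU}(2,1)$ after conjugation in the Siegel model). Both facts are stated or are immediate from the explicit matrices in \cite{parker_wang_xie}, since these representations are index-two subgroups of triangle groups in which a specified word $W$ has been imposed to be parabolic, and that parabolicity is of unipotent type by construction.

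Once these two points are verified, \Cref{prop:cusps_are_horocusps} applies to each cusp of each uniformization, and supplies the required horotube description $\langle [P]\rangle \backslash H$ of the spherical CR structure in a cusp neighborhood. Plugging this into the hypotheses of the Dehn surgery theorem in \cite{acosta_spherical_2016} yields the statement. The only step that requires care, and which I expect to be the main (purely bookkeeping) obstacle, is checking horizontal unipotency of the peripheral generators uniformly across the family: one has to make sure the parabolic element is not only unipotent but corresponds to a Heisenberg translation rather than a Heisenberg screw motion, which is immediate from the explicit eigenvalue and trace conditions used in \cite{parker_wang_xie} but has to be written down for each case.
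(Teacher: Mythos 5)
Your proposal is correct and follows the same route as the paper, which simply observes that the corollary is a direct consequence of \Cref{prop:cusps_are_horocusps} once one knows that the peripheral holonomy of each uniformization in \cite{parker_wang_xie} is generated by a single horizontal unipotent parabolic. Your additional bookkeeping about verifying horizontal unipotency cusp by cusp is exactly the (implicit) content the paper relies on, so there is nothing to add.
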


	This last corollary, that follows directly from \Cref{prop:cusps_are_horocusps}, implies that there is an infinite family of hyperbolic cusped manifolds for which an infinite number of Dehn surgeries admit spherical CR structures. Observe, however, that these structures are not necessarily uniformizable.

\section{The uniformization}\label{sect:the_uniformization}
%\subsection{The manifold and the group}
In the rest of this article, we will focus on a particular discrete subgroup of $\mathrm{SU}(2,1)$ that gives a spherical CR uniformization of a one-cusped hyperbolic manifold. We will denote by $\Gamma_6$ or $\Gamma$ the group, and by $M_6$ the manifold that it uniformizes, meaning that $\Gamma$ has a non-empty discontinuity set $\Omega_\Gamma$ and that $M_6 \simeq \Gamma \backslash \Omega_\Gamma$.
The manifold $M_6$ and the group $\Gamma_6$ can be defined in several ways. On the one hand, the manifold $M_6$ is homeomorphic to:
\begin{itemize}
	\item The Dehn surgery of one cusp of the Whitehead link complement of slope $3$. (For the Snappy marking, the peripheral curve that is killed has coordinates $(m,l) = (3,1)$).%where $U$ is of order $6$
	\item The one-punctured torus bundle over $S^1$ with holonomy $
	\left(
	\begin{smallmatrix}
	4 & 3 \\
	1 & 1
	\end{smallmatrix}
	\right)$
	 (named \texttt{b++RRRL} in Snappy).
	\item The manifold \texttt{m023} in the Snappy census of cusped hyperbolic manifolds.
\end{itemize}
On the other hand, the group $\Gamma_6 \subset \mathrm{SU}(2,1)$ is conjugate to:
\begin{itemize}
	\item The index $2$ subgroup of the $(3,3,6)$-triangle group where $I_1 I_2 I_3$ is unipotent, that appears in \cite{parker_wang_xie}.
	\item The Parker-Will representation of $\zz{3}$ of parameter $(\alpha_1,\alpha_2) = (0,\frac{\pi}{3})$, that appears in \cite{parker_complex_2017a}.
	\item A lift in $\mathrm{SU}(2,1)$ of the point in the character variety $\mathcal{X}_{\mathrm{SU}(2,1)}(\zz{3})$ of coordinates $(3,2\cos(\frac{\pi}{3})+1) = (3,2)$, that appears in \cite{acosta_character_2019}
\end{itemize}

We will mainly use the explicit parametrization given by Parker and Will in \cite{parker_complex_2017a}. The group is generated by two order $3$ elements. We will keep the notation of \cite{parker_complex_2017a} and \cite{acosta_spherical_2019}, that we recall briefly.

\begin{notat}
	Let $S,T \in \mathrm{SU}(2,1)$ be the order $3$ generators of $\Gamma_6$, as described in \cite{parker_complex_2017a}. Keeping the same notation, we let $A=ST$ and $B=TS$. We also keep the notation in \cite{acosta_spherical_2019} and let $U=S^{-1}T$ and $V=TS^{-1}$.
	Note that $U$ and $V$ have order $6$ in this group.
The matrices $S$, $T$, $A$ and $B$ are explicitly given by
\begin{align*}
	S &=
	\begin{pmatrix}
	1 & \sqrt{2}\,\conjug{\zeta} & -1 \\
	-\sqrt{2} \zeta & -1 & 0 \\
	-1 & 0 & 0
	\end{pmatrix}
	&
	T &=
	\begin{pmatrix}
	0 & 0 & -1 \\
	0 & -1 & - \sqrt{2}\,\conjug{\zeta}  \\
	-1 &\sqrt{2} \zeta   & 1
	\end{pmatrix}
	\\
	A &=
	\begin{pmatrix}
	1 & -\sqrt{2} & i \sqrt{3} - 1 \\
	0 & 1 & \sqrt{2} \\
	0 & 0 & 1
	\end{pmatrix}
	&
	B &=
	\begin{pmatrix}
	1 & 0 & 0 \\
	\sqrt{2} & 1 & 0 \\
	-i  \sqrt{3} - 1 & -\sqrt{2} & 1
	\end{pmatrix}
\end{align*}
where $\zeta = \exp(\frac{i\pi}{3}) = \frac{1 +i \sqrt{3}}{2}$.
\end{notat}

\begin{rem}
	The coefficients of the Parker-Will representation are in $\QQ[i,\sqrt{2},\sqrt{3}]$
\end{rem}

As in \cite{acosta_spherical_2019}, if $G \in \mathrm{SU}(2,1)$ is a regular elliptic element, we denote by $[p_G]$ its fixed point in $\hc{2}$ and $p_G \in \CC^3$ a lift. In the same way, if $G \in \mathrm{SU}(2,1)$ is parabolic, we denote by $[p_G]$ its fixed point in $\dhc{2}$ and $p_G \in \CC^3$ a lift. There are several possibilities for choosing the lifts in $\CC^3$. However, in our case, if $G_1,G_2 \in \Gamma$ and $G_2$ is elliptic or parabolic, we choose as lift for $[p_{G_1G_2G_1^{-1}}]$ the point $G_1 p_{G_2}$. For example, we can choose the lifts:
\begin{align*}
	p_A &=
	\begin{pmatrix}
	1 \\ 0 \\ 0
	\end{pmatrix}
	&
	p_B &=
	\begin{pmatrix}
	0 \\ 0 \\ 1
	\end{pmatrix}
	&
	p_U &=
	\begin{pmatrix}
	4\\
	-\sqrt{2}(3+i\sqrt{3}) \\
	-4
	\end{pmatrix}
	&
	p_V &=
	\begin{pmatrix}
	4 \\
	\sqrt{2}(1-i\sqrt{3})\\
	-2(1+ i \sqrt{3})
	\end{pmatrix}
\end{align*}

 We consider the bisectors $\mathcal{J}_0^{+} = \mathfrak{B}(p_U,p_V)$,
 $\mathcal{J}_0^{-} = S \mathcal{J}_0^{+}$ and, for $k \in \ZZ/6\ZZ$,
 $\mathcal{J}_k^{\pm} = U^k \mathcal{J}_0^{\pm}$.
 Note that these bisectors are the same as the ones considered by Parker, Wang and Xie in \cite{parker_wang_xie}. The correspondence in the notation for the group is given by $U=I_1I_2$, $S^{-1} = I_1I_3$, $T = I_3I_2$; the corresponding bisectors are $\mathcal{J}_k^{+} = \mathcal{B}_{-2k}$ and $\mathcal{J}_k^{-} = \mathcal{B}_{-2k-1}$.

In \cite{parker_wang_xie}, Parker, Wang and Xie prove that the Dirichlet domain for $\Gamma_6$ centered at $[p_U]$ is bounded by $12$ bisectors, namely $\{\mathcal{J}_k^{\pm} \mid k \in \ZZ/6\ZZ \}$. Using the Poincaré polyhedron theorem, they obtain, as a particular case of \cite[Theorem 1.6]{parker_wang_xie}:
\begin{prop}
	The group $\Gamma_6$ is discrete in $\mathrm{SU}(2,1)$. 
	Furthermore, the domain in $\hc{2}$ bounded by the bisectors $\mathcal{J}_k^{\pm}$ for $k \in \ZZ/6\ZZ$ is the Dirichlet domain of $\Gamma_6$ centered at $[p_U]$. 
	Moreover, $\Gamma_6$ admits the presentation 
	$\langle s,t \mid s^3,t^3,(s^{-1}t)^6 \rangle$.
\end{prop}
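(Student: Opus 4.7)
The plan is to apply the Poincaré polyhedron theorem directly to the polyhedron $D \subset \hc{2}$ bounded by the twelve bisectors $\mathcal{J}_k^{\pm}$, $k \in \ZZ/6\ZZ$, which is precisely what Parker, Wang and Xie carry out in the more general setting of \cite{parker_wang_xie}. Since the proposition is stated as a particular case of their \cite[Theorem~1.6]{parker_wang_xie}, the proof reduces to checking that the $(3,3,6)$-triangle group with the parabolic condition $I_1 I_2 I_3$ unipotent falls inside the range of parameters covered there, and that the translation dictionary $U = I_1I_2$, $S^{-1} = I_1 I_3$, $T = I_3 I_2$ matches their labelling of bisectors.

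First I would recall the side pairings: the generator $U = S^{-1}T$ sends $\mathcal{J}_k^{\pm}$ to $\mathcal{J}_{k+1}^{\pm}$ by construction, and $S$ exchanges $\mathcal{J}_0^{+}$ and $\mathcal{J}_0^{-}$, which yields a pairing of the other $\mathcal{J}_k^{+}$ with $\mathcal{J}_k^{-}$ via conjugation by powers of $U$. Since all the $\mathcal{J}_k^{\pm}$ are equidistant from $[p_U]$ (the $S$-side by equidistance from $p_U$ and $Sp_U = p_V$, and the rotated copies by $U$-invariance of $[p_U]$), these pairings preserve the exterior of $D$.

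The second step is the verification of the cycle conditions. The ridges are the non-empty intersections $\mathcal{J}_k^{\epsilon} \cap \mathcal{J}_\ell^{\eta}$; one groups them into cycles under the side pairings and checks that the composition of the corresponding generators along each cycle is either the identity or a finite-order rotation of angle $2\pi/n$. The two essential cycles produce the elliptic relations $S^3 = 1$ and $T^3 = 1$, while the cycle around the edges adjacent to the parabolic fixed points of $U$-conjugates produces $(S^{-1}T)^6 = U^6 = 1$. No further relation appears, and the Poincaré theorem then gives the presentation $\langle s,t \mid s^3, t^3, (s^{-1}t)^6 \rangle$, the discreteness of $\Gamma_6$, and the fact that $D$ is a fundamental domain, hence the Dirichlet domain centered at $[p_U]$.

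The main obstacle, and indeed the hard part of \cite{parker_wang_xie}, is the combinatorial and metric analysis of the bisector intersections: one must check that the ridges are connected (this uses the fact, recalled in the excerpt, that bisectors equidistant from a single point intersect nicely), that the local tessellation at each ridge is correct, and that the cycle transformations genuinely have the predicted finite order. Additionally, because the structure is cusped, a separate argument at the parabolic fixed points (the ideal vertices at $\infty$) is needed to ensure the completeness condition of the Poincaré theorem; this is handled through the horotube structure of the cusp, which is precisely what \Cref{prop:cusps_are_horocusps} guarantees for this uniformization. Once all these verifications are in place — and they are in \cite{parker_wang_xie} — the three conclusions of the proposition follow simultaneously.
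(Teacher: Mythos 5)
Your overall strategy coincides with the paper's: the proposition is not proved here at all, but imported wholesale as a particular case of \cite[Theorem~1.6]{parker_wang_xie}, and your outline of the side pairings ($U$ rotating the $\mathcal{J}_k^{\pm}$, $S$ pairing $\mathcal{J}_0^{+}=\mathfrak{B}(p_U,Sp_U)$ with $\mathcal{J}_0^{-}$), the cycle relations, and the deferral of the bisector-intersection analysis to \cite{parker_wang_xie} is a faithful account of what that reference does.

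One step in your write-up is, however, circular and would fail if executed as stated: you propose to verify the completeness condition of the Poincar\'e polyhedron theorem at the ideal vertices ``through the horotube structure of the cusp, which is precisely what \Cref{prop:cusps_are_horocusps} guarantees for this uniformization.'' That proposition takes as hypothesis that $(\dev,\rho)$ is already a spherical CR \emph{uniformization} --- in particular that $\Gamma_6$ is discrete, that $\Omega_{\Gamma_6}$ is known, and that $M\simeq\Gamma\backslash\Omega_\Gamma$ --- all of which are among the conclusions the Poincar\'e theorem is being invoked to establish. Moreover it is a statement about the boundary at infinity $\dhc{2}$, whereas the completeness hypothesis concerns the tessellation of a neighbourhood of the parabolic fixed point inside $\hc{2}$; in \cite{parker_wang_xie} this is checked independently via a consistent system of horoballs at the ideal vertices. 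Replace the appeal to \Cref{prop:cusps_are_horocusps} by a citation of that verification and the argument is sound.
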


Considering the boundary at infinity of the domain and doing some topological considerations, we prove in \cite{acosta_spherical_2019} that it gives a uniformizable spherical CR structure on $M_6$. Thus, we obtain:
\begin{prop}
	The space $\Gamma_6 \backslash \Omega_{\Gamma_6}$ is a manifold homeomorphic to $M_6$.
\end{prop}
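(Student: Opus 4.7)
The plan is to leverage the previous proposition: since the Dirichlet domain $D$ centered at $[p_U]$ is bounded by the twelve bisectors $\mathcal{J}_k^{\pm}$, I would look at its closure in $\overline{\hc{2}} = \hc{2} \cup \dhc{2}$ and extract the boundary-at-infinity piece $D_\infty = \overline{D} \cap \dhc{2}$, which is bounded by the twelve spinal spheres $\partial_\infty \mathcal{J}_k^{\pm}$. By standard arguments for Dirichlet domains, the interior of $D_\infty$ lies in $\Omega_{\Gamma_6}$, and the restrictions of the side pairings of $D$ to these spinal spheres give a side-pairing structure on $D_\infty$ whose quotient is $\Gamma_6 \backslash \Omega_{\Gamma_6}$.

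First, I would verify that $\Gamma_6$ acts freely on $\Omega_{\Gamma_6}$. Any non-trivial element of $\Gamma_6$ fixing a point of $\dhc{2}$ must be parabolic (elliptic elements fix only points of $\hc{2}$), and the fixed point of a parabolic element lies in the limit set $\Lambda_{\Gamma_6}$, hence outside $\Omega_{\Gamma_6}$. Thus the quotient is a manifold, with the spherical CR structure induced by the projection.

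Next, I would identify the topology of this quotient with $M_6$. The natural approach is to apply \Cref{prop:cusps_are_horocusps} to the parabolic subgroup generated by the unipotent element corresponding to $I_1 I_2 I_3$, so that the cusp region of the quotient has the shape $\langle P \rangle \backslash H$ for a horotube $H$, and hence behaves topologically as the end of a cusped $3$-manifold. Then I would analyze the combinatorics of the face identifications on $D_\infty$ (the spinal spheres pair off under $U, U^{-1}, S, S^{-1}$ and their conjugates, consistently with the presentation $\langle s,t \mid s^3, t^3, (s^{-1}t)^6 \rangle$) and compare the resulting cell decomposition with a known decomposition of $M_6$. Concretely, since $M_6$ is obtained as the $(3,1)$-Dehn surgery on a cusp of the Whitehead link complement, I would invoke the explicit gluing description from \cite{acosta_spherical_2019}, where $M_6$ is the endpoint of the surgery family analyzed for the Parker--Will parameter $(0,\pi/3)$.

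The main obstacle is the topological identification with $M_6$ itself: one must turn the combinatorial gluing of the twelve spinal spheres into a recognizable description of $M_6$, rather than merely an abstract closed manifold with a single cusp. In principle this could be done by checking that the fundamental group of the quotient matches $\pi_1(M_6)$ and that the cusp cross-section is a torus with the correct peripheral structure, but the cleanest route is to reference the construction in \cite{acosta_spherical_2019}, which already performs this bookkeeping for the whole $\Gamma_{3m}$ family and specializes to $M_6$ for $m=2$.
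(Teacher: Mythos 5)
Your overall strategy coincides with the paper's: the paper gives no self-contained argument for this proposition and simply records that the identification of $\Gamma_6 \backslash \Omega_{\Gamma_6}$ with $M_6$ is carried out in \cite{acosta_spherical_2019} by examining the boundary at infinity of the Dirichlet domain of \cite{parker_wang_xie}. Your plan of extracting the boundary-at-infinity piece of that domain, pairing the twelve spinal spheres, and deferring the topological bookkeeping to \cite{acosta_spherical_2019} is therefore the same route.

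However, the two steps you supply yourself are not sound as written. First, your freeness argument rests on the claim that elliptic elements of $\mathrm{PU}(2,1)$ fix only points of $\hc{2}$; this is false in general, since a complex reflection about a complex line is elliptic and fixes the entire boundary $\CC$-circle of that line in $\dhc{2}$. Only \emph{regular} elliptic elements have no fixed point on $\dhc{2}$. Since $\Gamma_6$ has torsion ($S$ and $T$ of order $3$; $U$ and $V$ of order $6$, with $U^3$ and $V^3$ of order $2$), one must check, conjugacy class by conjugacy class, that none of these is a complex reflection about a line: for instance $S$ and $T$ are regular elliptic with eigenvalues $1,\omega,\omega^2$, and $V^3$ has eigenvalues $1,-1,-1$ with positive-definite $(-1)$-eigenspace, so its only fixed point in $\overline{\hc{2}}$ is $[p_V] \in \hc{2}$. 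Without this verification, freeness of the action on $\Omega_{\Gamma_6}$ does not follow from your stated reason. Second, invoking \Cref{prop:cusps_are_horocusps} here is circular: that proposition takes as hypothesis that $(\dev,\rho)$ is already a spherical CR \emph{uniformization} of $M$, i.e.\ that $M \simeq \Gamma \backslash \Omega_\Gamma$, which is precisely what you are trying to establish. The end of $\Gamma_6 \backslash \Omega_{\Gamma_6}$ near the parabolic fixed point must be analyzed directly (as is done in \cite{acosta_spherical_2019}), not via that proposition.
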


We conclude this section by two miscellaneous facts about the subgroup of $\Gamma_6$ generated by $A$ and $B$ and the limit set $\Lambda_{\Gamma_6}$.

\begin{rem}
	As noticed by Parker and Will in \cite[p. 3415]{parker_complex_2017a}, we have $[A,B] = V^3$.
\end{rem}

\begin{prop}\label{prop:AB_finite_index}
	The subgroup $\langle A , B \rangle$ generated by $A$ and $B$ is a normal subgroup of index $3$ of $\Gamma_6$.
\end{prop}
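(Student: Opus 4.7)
The plan is to exhibit $\langle A, B \rangle$ as the kernel of a natural surjection $\Gamma_6 \twoheadrightarrow \ZZ/3\ZZ$. Using the abstract presentation $\langle s, t \mid s^3, t^3, (s^{-1}t)^6 \rangle$, I define $\phi \colon \Gamma_6 \to \ZZ/3\ZZ$ by $\phi(S) = 1$ and $\phi(T) = -1$; the three defining relations all map to $0$, so $\phi$ is well-defined, and $\phi(A) = \phi(ST) = 0 = \phi(TS) = \phi(B)$. Hence $\langle A, B \rangle \subseteq \ker \phi$, which already gives $[\Gamma_6 : \langle A, B \rangle] \geq 3$.

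The main step is to upgrade this inclusion to equality, which amounts to showing that $\langle A, B \rangle$ is normal in $\Gamma_6$. I would check that both generators $S$ and $T$ conjugate $\{A, B\}$ back into $\langle A, B \rangle$. Two identities are immediate from $A = ST$ and $B = TS$:
\[
S^{-1} A S \;=\; TS \;=\; B, \qquad T^{-1} B T \;=\; ST \;=\; A,
\]
and symmetrically $SBS^{-1} = A$ and $TAT^{-1} = B$. The remaining conjugates reduce, using $S^{-1} = S^2$ and $T^{-1} = T^2$, to
\[
S A S^{-1} \;=\; S^{-1} T S^{-1} \;=\; (S T^{-1} S)^{-1} \;=\; (AB)^{-1} \;=\; B^{-1} A^{-1},
\]
and analogously $T B T^{-1} = A^{-1} B^{-1}$ (together with the inverse conjugates $S^{-1} B S$ and $T^{-1} A T$). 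All of these lie in $\langle A, B \rangle$, so $\langle A, B \rangle$ is normal.

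Once normality is in hand, the quotient can be read off directly from the presentation: imposing $A = ST = 1$ and $B = TS = 1$ forces $T = S^{-1}$, and the remaining relations $T^3 = 1$ and $(S^{-1}T)^6 = 1$ collapse to consequences of $S^3 = 1$. Thus $\Gamma_6 / \langle A, B \rangle \cong \langle s \mid s^3 \rangle \cong \ZZ/3\ZZ$, so $\langle A, B \rangle = \ker \phi$ and $[\Gamma_6 : \langle A, B \rangle] = 3$. The only slightly non-formal point is the identity $SAS^{-1} = B^{-1}A^{-1}$ (and its $T$-analog), which is what secures normality; everything else is a direct manipulation of the presentation.
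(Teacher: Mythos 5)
Your proof is correct and follows essentially the same route as the paper: verify normality by checking that conjugation by $S$ and $T$ sends $A$ and $B$ into $\langle A,B\rangle$ (with the same key identities $SAS^{-1}=B^{-1}A^{-1}$, $SBS^{-1}=A$, $TAT^{-1}=B$, $TBT^{-1}=A^{-1}B^{-1}$), then read off the quotient $\langle s,t\mid s^3,t^3,(s^{-1}t)^6,st\rangle\simeq\ZZ/3\ZZ$ from the presentation. The preliminary homomorphism $\phi$ is a harmless extra that the paper omits, since the quotient computation already pins down the index.
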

\begin{proof}
	First, we prove that $\langle A , B \rangle$ is a normal subgroup. Since $\Gamma_6$ is generated by $S$ and $T$, which have finite order, we only need to check that $SAS^{-1}$, $SBS^{-1}$, $TAT^{-1}$ and $TBT^{-1}$ belong to $\langle A, B \rangle$.
	But
	$SAS^{-1} = B^{-1} A^{-1}$, 
	$SBS^{-1} = A$, 
	$TAT^{-1} = B$ 
	and $TBT^{-1} = A^{-1} B^{-1}$, so $\langle A, B \rangle$ is a normal subgroup of $\Gamma_6$.
	
	Now, consider the quotient group $\Gamma_6 / \langle A,B \rangle$. It has the presentation $\langle s,t \mid s^3,t^3,(s^{-1}t)^6,st \rangle$, so $t = s^{-1}$, and the presentation simplifies to
	$\langle s \mid s^3 \rangle \simeq \ZZ / 3 \ZZ$.
\end{proof}

Since the limit set of a group is the same as the limit set of any finite index subgroup, we obtain that
the limit set of $\langle A,B \rangle$ is equal to the limit set of $\Gamma_6$.
Some views of this set are pictured in \Cref{fig:views_limit_set}.

\begin{figure}[htb]
	\centering
	\includegraphics[width=0.4\textwidth]{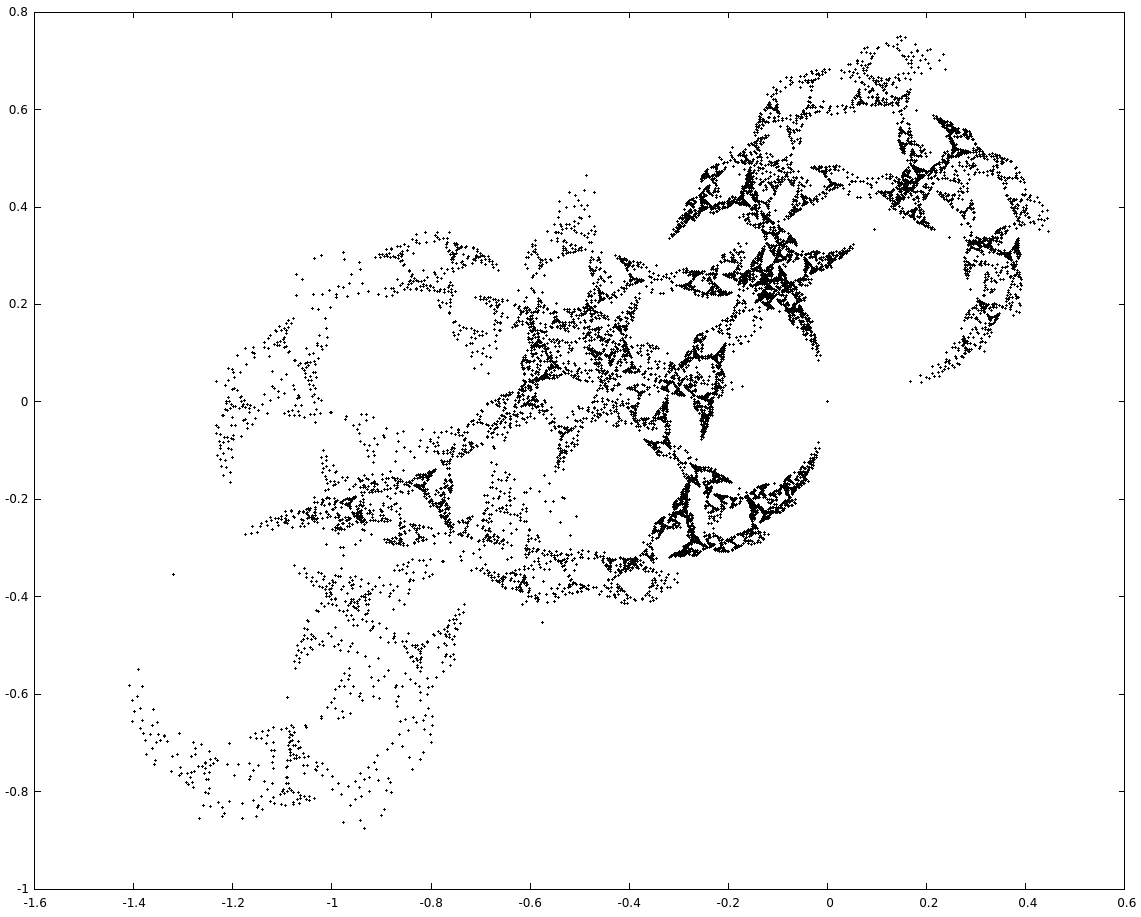}
	\hfill
	\includegraphics[width=0.4\textwidth]{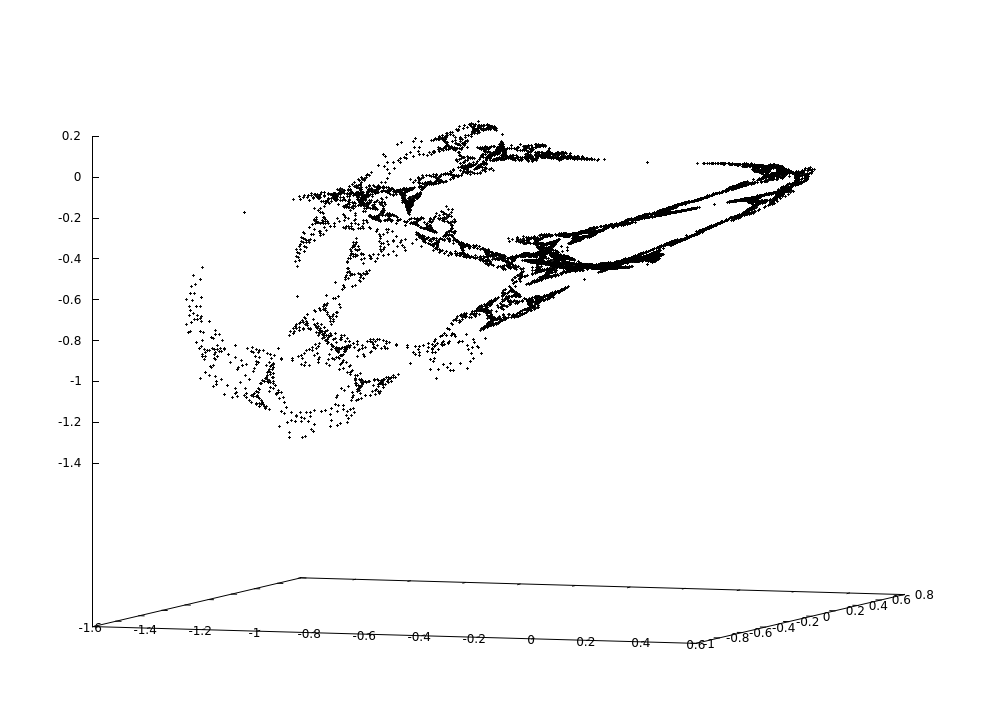} \\
	\includegraphics[width=0.4\textwidth]{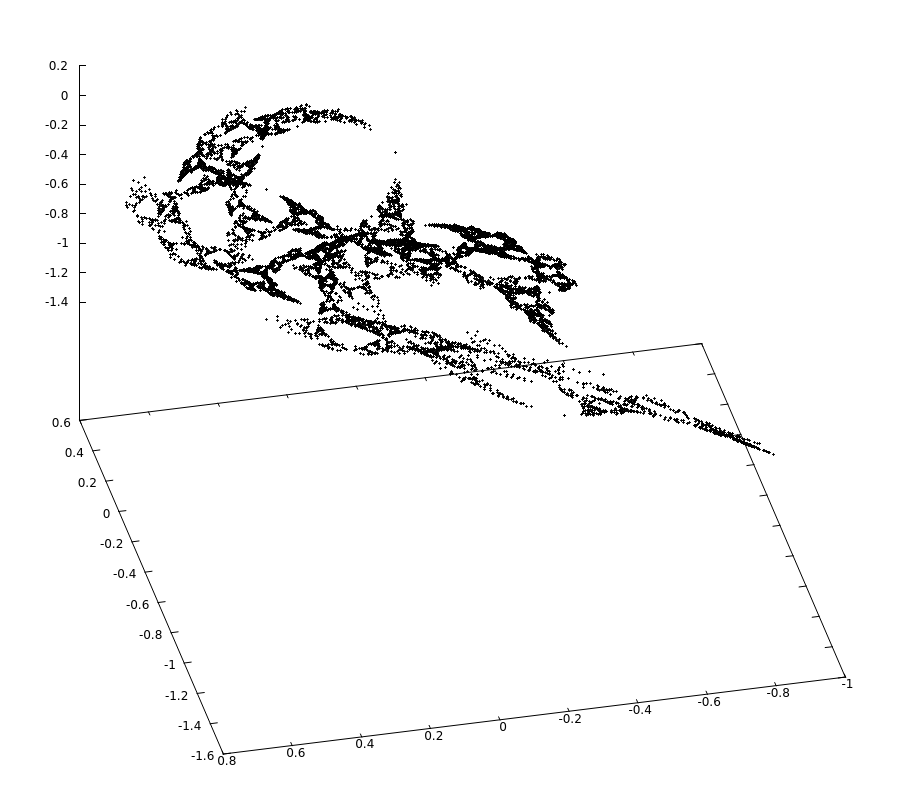}
	\hfill
	\includegraphics[width=0.4\textwidth]{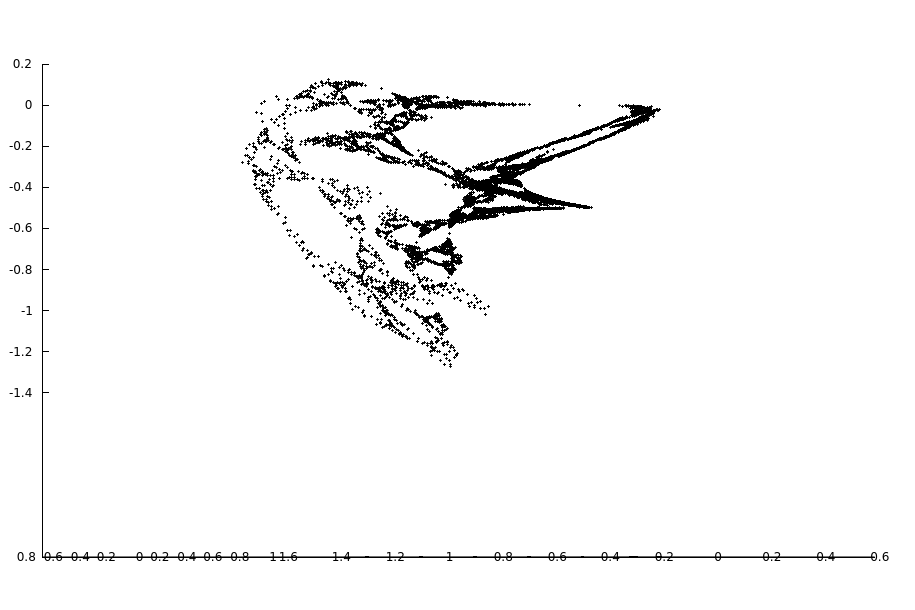}
	\caption{Different views of $\Lambda_{\Gamma_6}$ in the Siegel model.\label{fig:views_limit_set}}
\end{figure}

\section{An $\RR$-circle in the limit set}\label{sect:r_circle_limit_set}
We are going to prove that there is a string of beads related to the action of a subgroup of $\Gamma_6$. With this object, we will be able to prove that $\Lambda_\Gamma$ contains a topological circle, and then prove that the circle is in fact an $\RR$-circle. The fact that $\Lambda_\Gamma$ is the closure of the orbit of this $\RR$-circle by $\Gamma$ follows immediately, which gives us a situation similar to the one in \cite{schwartz_real_2003}. We will then prove three facts about the limit set and the domain of discontinuity of $\Gamma$, namely that the limit set is connected, that it contains a Hopf link and that the fundamental group of the domain of discontinuity is not finitely generated.

\subsection{A string of beads}
  Following the article of Dutenhefner and Gusevskii \cite{dutenhefner_complex_2004}, a
 \emph{string of beads} is
 a finite collection of pairs of spinal spheres $S = \{(S_k,S_k') \mid k \in \{1 , \dots , n \} \}$
  placed along a knot $K$ and satisfying the following condition:
 there is an enumeration $T_1, \dots , T_{2n}$ of the spheres, where the indexes are considered mod $2n$, such that each 
 $T_k$ is tangent to $T_{k \pm 1}$ in an isolated point and
 lies strictly outside all the other spheres.
 
 Let us point out two slight differences with the content of \cite{dutenhefner_complex_2004}, that will not raise any problem. 
 First, observe that the definition above extends the definition of Dutenhefner and Gusevskii from spheres in the Heisenberg group to spheres in $\dhc{2}$, so we should be careful with the meaning of "inside" and "outside" since there is no longer a canonical choice. However, since our spinal spheres are boundaries of bisectors that define a Dirichlet domain, "outside" is to be considered with respect to this domain.
 The other difference is, in our case, that the knot $K$ is unknotted, so there will be no immediate consequences on the limit set. However, this does not change the fact that the limit set of the subgroup that we will consider is a topological circle.
 
 In the following lemma we prove that we have a string of beads made of $4$ bisectors, that have tangency points as in \Cref{fig:string_of_beads}.
 
\begin{figure}[htbp]
	\centering
	\includegraphics[width=10cm]{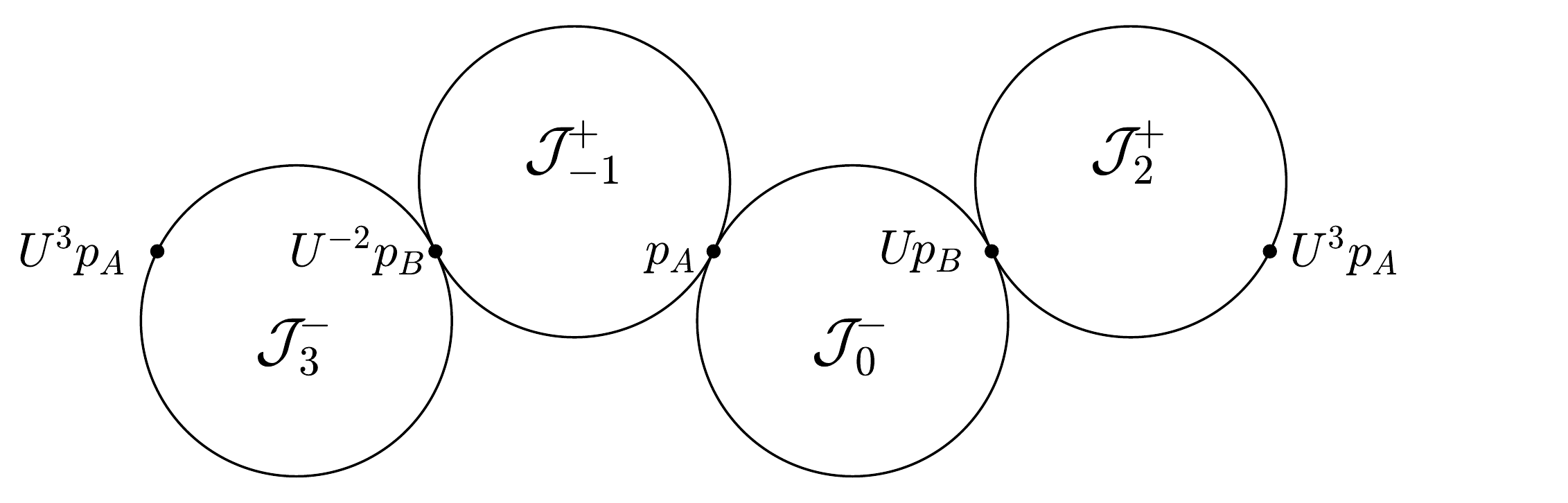}
	\caption{Combinatorics of the string of beads}\label{fig:string_of_beads}
\end{figure}

\begin{lemme}
	The boundaries at infinity of the bisectors $\mathcal{J}_0^{-}$, $\mathcal{J}_{-1}^{+}$, $\mathcal{J}_3^{-}$ and $\mathcal{J}_{2}^{+}$ form a string of beads in $\dhc{2}$, with tangency points $[p_A]$, $U[p_B]$, $U^3[p_A]$ and $U^{-2}[p_B]$, arranged as in \Cref{fig:string_of_beads}.
\end{lemme}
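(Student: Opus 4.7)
My plan is to verify the three ingredients of the string-of-beads condition --- incidence at each tangency point, tangential rather than transverse intersection there, and disjointness elsewhere --- by combining explicit computation with the combinatorial description of the Dirichlet polyhedron of $\Gamma_6$ established in \cite{parker_wang_xie}. Throughout I use $\mathcal{J}_k^\pm = U^k\mathcal{J}_0^\pm$ and $\mathcal{J}_0^- = S\mathcal{J}_0^+ = \mathfrak{B}(Sp_U, Sp_V)$.

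First, the incidence. The condition that a point $[q]$ lies on $\partial_\infty\mathcal{J}_k^\pm$ is, after pulling back by $U^{-k}$ (and if necessary by $S^{-1}$) and using unitarity, equivalent to
\[
|\langle p_U, w\rangle| = |\langle p_V, w\rangle|
\]
for an explicit lift $w$ of the transported point. Plugging in the given lifts of $p_U$, $p_V$, $p_A$, $p_B$ together with the matrices of $S$, $T$ and $U$, this amounts to eight short direct calculations in $\QQ[i,\sqrt 2,\sqrt 3]$, one per incidence assertion.

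Second, tangency. Each of the four tangency points is a parabolic fixed point of a unipotent element of $\Gamma_6$: $A$ fixes $[p_A]$, $UBU^{-1}$ fixes $U[p_B]$, $U^3AU^{-3}$ fixes $U^3[p_A]$, and $U^{-2}BU^2$ fixes $U^{-2}[p_B]$. In Heisenberg coordinates centered at any such fixed point, the corresponding unipotent acts as a pure Heisenberg translation, so its differential at the fixed point is the identity on the ambient tangent $3$-space of $\dhc{2}$. Consequently, any two smooth surfaces through the fixed point that are permuted by the unipotent must share the same tangent $2$-plane there. I would then exhibit at each of the four ideal vertices an element of $\Gamma_6$ acting unipotently at the fixed point and sending one adjacent bisector of the string to the other (or at least to a surface with coinciding tangent plane); natural candidates are products of $A$ (or $B$) with a stabilizer of $\mathcal{J}_0^\pm$, and they are amenable to direct identification via short relations like $A = S^{-1}U$ in $\Gamma_6$. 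The other three tangencies follow by conjugation with the appropriate power of $U$.

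Third, separation. That each tangency is isolated, that the non-adjacent pairs $(\mathcal{J}_0^-,\mathcal{J}_3^-)$ and $(\mathcal{J}_{-1}^+,\mathcal{J}_2^+)$ have disjoint spinal spheres, and that each bead lies strictly outside the others, all follow from the combinatorial description of the Dirichlet polyhedron in \cite{parker_wang_xie}: under the dictionary $\mathcal{J}_k^+ = \mathcal{B}_{-2k}$, $\mathcal{J}_k^- = \mathcal{B}_{-2k-1}$, the four beads correspond to four faces of the polyhedron meeting pairwise precisely at the four listed ideal vertices, and any further intersection among them is empty; the outside condition is immediate because these four faces lie on the boundary of a single convex region. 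The main difficulty I anticipate is the tangency step: pinning down, at each ideal vertex, the specific element of $\Gamma_6$ whose differential-at-fixed-point argument forces the desired coincidence of tangent planes, rather than only showing that both spheres pass through the vertex. Once that is in place, the consequence for tangent planes is automatic and the rest of the lemma is either a direct computation or a combinatorial appeal to \cite{parker_wang_xie}.
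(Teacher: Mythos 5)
Your plan for the incidence and separation steps matches what the paper actually does: the paper's proof is essentially a two-line citation, taking the membership of the four points in the relevant spinal spheres from \cite[Corollary 8.4]{acosta_spherical_2019} and the tangencies together with the disjointness of $\mathcal{J}_0^-\cap\mathcal{J}_3^-$ and $\mathcal{J}_{-1}^+\cap\mathcal{J}_2^+$ from \cite[Theorem 4.3]{parker_wang_xie}. Re-deriving the incidences by the eight evaluations in $\QQ[i,\sqrt2,\sqrt3]$ is fine, and your appeal to the combinatorics of the Dirichlet polyhedron for the ``outside'' condition is consistent with how the paper interprets outsideness (relative to the Dirichlet domain).

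The genuine gap is in your tangency step, precisely the one you flag as the main difficulty. The mechanism you propose --- that a unipotent element acts at its fixed point with differential equal to the identity on the tangent $3$-space of $\dhc{2}$, so that two surfaces exchanged by it automatically share a tangent plane --- is false for the elements involved here. The element $A$ is a three-step (regular) unipotent; computing in the affine chart $(u,v)\mapsto[1:u:v]$ around $[p_A]=[1:0:0]$, its differential at the fixed point is the shear $(u,v)\mapsto(u+\sqrt2\,v,\,v)$, which is not the identity and fixes pointwise only the contact plane $\{v=0\}$. Hence $T_{[p_A]}\bigl(A\,\partial_\infty\mathcal{J}_{-1}^+\bigr)=dA\bigl(T_{[p_A]}\partial_\infty\mathcal{J}_{-1}^+\bigr)$ need not equal $T_{[p_A]}\partial_\infty\mathcal{J}_{-1}^+$ unless one separately shows that this tangent plane is $dA$-invariant (e.g.\ that $[p_A]$ is a pole of the spinal sphere, where it osculates the contact distribution). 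Moreover, even coinciding tangent planes would give only first-order contact, not that the intersection is a single isolated point as the string-of-beads definition requires; that global statement is exactly what \cite[Theorem 4.3]{parker_wang_xie} supplies and what the paper relies on. So as written your argument does not close the tangency step, whereas the paper closes it by citation rather than by any differential computation.
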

\begin{proof}
	This lemma follows immediately from the considerations done in \cite{acosta_spherical_2019} and \cite{parker_wang_xie} on the incidences and the combinatorics of the bisectors.
	The fact that the points belong to the corresponding bisectors follows immediately from
	\cite[Corollary 8.4]{acosta_spherical_2019}. It only remains to check the tangencies of two consecutive bisectors and the fact that $\mathcal{J}_0^{-} \cap \mathcal{J}_3^{-} = \mathcal{J}_{-1}^{+} \cap \mathcal{J}_{2}^{+} = \emptyset$.
	This is precisely the content of \cite[Theorem 4.3]{parker_wang_xie}.
\end{proof}

\begin{lemme}[Generators for the string of beads]
	The bisector $\mathcal{J}_{-1}^{+}$ is mapped by $A$ to $\mathcal{J}_0^{-}$ and the bisector $\mathcal{J}_{2}^{+}$ is mapped by $U^{3}AU^{-3}$ to $\mathcal{J}_3^{-}$.
	Furthermore, $A(U^{-2}[p_B]) = U[p_B]$ 
	and $U^3AU^{-3}(U[p_B]) = U^{-2}[p_B]$.
\end{lemme}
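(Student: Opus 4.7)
The plan is to prove the two identities for $A$ first and deduce those for $U^3 A U^{-3}$ by conjugation. Since $\mathcal{J}_k^{\pm} = U^k \mathcal{J}_0^{\pm}$, the equality $A(\mathcal{J}_{-1}^+) = \mathcal{J}_0^-$ immediately yields $(U^3 A U^{-3})(\mathcal{J}_2^+) = U^3 A(U^{-3}\mathcal{J}_2^+) = U^3 A(\mathcal{J}_{-1}^+) = U^3 \mathcal{J}_0^- = \mathcal{J}_3^-$; similarly, using $U^6 = \mathrm{Id}$ so that $U^3 \cdot U^{-2}[p_B] = U[p_B]$ and $U^3 \cdot U[p_B] = U^{-2}[p_B]$, the point identity $A(U^{-2}[p_B]) = U[p_B]$ transfers to the corresponding statement for $U^3 A U^{-3}$. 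It therefore suffices to prove the two identities for $A$.

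For the point identity, I would establish the algebraic relation $A = UBU^2$ in $\Gamma_6$ from $A = ST$, $B = TS$, $U = S^{-1}T$ together with $S^3 = T^3 = \mathrm{Id}$. Expanding
\[
UBU^2 = (S^{-1}T)(TS)(S^{-1}T)(S^{-1}T)
\]
and simplifying repeatedly via $S^2 = S^{-1}$ and $T^2 = T^{-1}$ collapses the product to $ST = A$. Since $B[p_B] = [p_B]$, this immediately gives $A(U^{-2}[p_B]) = UBU^2 \cdot U^{-2}[p_B] = UB[p_B] = U[p_B]$.

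For the bisector identity $A(\mathcal{J}_{-1}^+) = \mathcal{J}_0^-$, the starting point is the matrix relation $AU^{-1} = ST \cdot T^{-1}S = S^2 = S^{-1}$, so that $A(\mathcal{J}_{-1}^+) = S^{-1}(\mathcal{J}_0^+)$, and the task becomes to identify this bisector with $\mathcal{J}_0^- = S(\mathcal{J}_0^+)$. The cleanest route is via the correspondence $\mathcal{J}_k^+ = \mathcal{B}_{-2k}$, $\mathcal{J}_k^- = \mathcal{B}_{-2k-1}$ with the Parker--Wang--Xie bisectors: each $\mathcal{J}_k^{\pm}$ is one of the $12$ faces of the Dirichlet polyhedron centered at $[p_U]$, these faces come in $6$ face-pairing pairs, and one reads off from \cite{parker_wang_xie} that $A$ is precisely the side-pairing element sending $\mathcal{J}_{-1}^+$ to $\mathcal{J}_0^-$. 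The main obstacle is this last identification: beyond the algebraic identity $AU^{-1} = S^{-1}$, one must verify that $S^{-1}\mathfrak{B}(p_U, p_V)$ and $S\mathfrak{B}(p_U, p_V)$ are the same bisector in $\hc{2}$, which requires genuine geometric input on how $S$ acts on the data $(p_U, p_V)$ defining $\mathcal{J}_0^+$; this is most efficiently imported from the face-pairing structure used in the Poincar\'e polyhedron theorem for $\Gamma_6$ in \cite{parker_wang_xie}, which spares us an explicit coordinate computation of spines and midpoints.
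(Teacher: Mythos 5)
Your reduction of the $U^{3}AU^{-3}$ statements to the $A$ statements by conjugation, and your proof of the point identity via the word identity $A=UBU^{2}$ (equivalently $AU^{-2}=UB$) together with $B[p_B]=[p_B]$, are both correct and are essentially the paper's argument. The gap is in the bisector identity. You correctly compute $AU^{-1}=S^{-1}$, hence $A\mathcal{J}_{-1}^{+}=AU^{-1}\mathcal{J}_0^{+}=S^{-1}\mathcal{J}_0^{+}$, and you then declare that the remaining task is to verify $S^{-1}\mathcal{J}_0^{+}=S\mathcal{J}_0^{+}$ and propose to import this from the face-pairing data of \cite{parker_wang_xie}. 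That identity cannot hold: since $S^{3}=\mathrm{Id}$, the equality $S^{-1}\mathcal{J}_0^{+}=S\mathcal{J}_0^{+}$ would give $S^{2}\mathcal{J}_0^{+}=\mathcal{J}_0^{+}$ and hence $S\mathcal{J}_0^{+}=\mathcal{J}_0^{+}$, i.e.\ $\mathcal{J}_0^{-}=\mathcal{J}_0^{+}$, which is absurd since these are two distinct faces of the Dirichlet domain (and two distinct, non-tangent beads of the string). So the statement you plan to quote is false and will not be found in \cite{parker_wang_xie}; as written, your proof of the first identity fails at exactly the step you flag as the main obstacle.

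What your computation does show is that the lemma is equivalent to $\mathcal{J}_0^{-}=S^{-1}\mathcal{J}_0^{+}$, and this is the identity one should be proving; the description of $\mathcal{J}_0^{-}$ that the paper's own proof relies on is not $S\mathcal{J}_0^{+}$ but the Dirichlet one, $\mathcal{J}_0^{-}=\mathfrak{B}(p_U,p_W)$ with $W=STS=AUA^{-1}$. Concretely, the paper checks the conjugation identities $A^{-1}UA=U^{-1}VU$ and $A^{-1}WA=U$, so that $A^{-1}$ carries the defining pair $(p_U,p_W)$ of $\mathcal{J}_0^{-}$ to $(U^{-1}p_V,p_U)$, which is the defining pair of $\mathcal{J}_{-1}^{+}=U^{-1}\mathfrak{B}(p_U,p_V)=\mathfrak{B}(p_U,U^{-1}p_V)$. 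In your normalization the same computation reads: $S^{-1}VS=U$ and $S^{-1}US=STS=W$, so $S^{-1}\mathfrak{B}(p_U,p_V)=\mathfrak{B}(p_W,p_U)=\mathfrak{B}(p_U,Ap_U)=\mathcal{J}_0^{-}$, after which $A\mathcal{J}_{-1}^{+}=S^{-1}\mathcal{J}_0^{+}=\mathcal{J}_0^{-}$ follows. So the missing ingredient is not a symmetry of $\mathcal{J}_0^{+}$ under $S$ but the identification of $\mathcal{J}_0^{-}$ as the Dirichlet bisector $\mathfrak{B}(p_U,Ap_U)$; with that input your argument closes, but the route you describe does not.
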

\begin{proof}
	We only need to prove the first point of each part of the statement; the second one follows immediately by translating by $U^3$.
	We have:
	$A^{-1}\mathcal{J}_0^{-} = A^{-1}\mathfrak{B}(p_U,p_W) 
	=\mathfrak{B}(A^{-1}p_U,A^{-1}p_W)$.
	Now, we compute
	\begin{align*}
		A^{-1}UA &= T^{-1}S^{-1}S^{-1}TST & A^{-1}WA &= T^{-1}S^{-1} STS ST \\
				&= T^{-1}STST & &= S^{-1}T \\
				&= T^{-1}STS^{-1}S^{-1}T & &= U \\
				&= U^{-1}VU
	\end{align*}
	so $A^{-1}p_U = p_{A^{-1}UA} = p_{U^{-1}VU} = U^{-1}p_V$
	and $A^{-1}p_W = p_{A^{-1}WA} = p_U$.
	Hence, \[A^{-1}\mathcal{J}_0^{-}  
	=\mathfrak{B}(A^{-1}p_U,A^{-1}p_W)
	=\mathfrak{B}(U^{-1}p_V,p_U)
	=\mathcal{J}_{-1}^{+}.\]
	For the second point, we have
	\begin{align*}
		AU^{-2}p_B &= STT^{-1}ST^{-1}Sp_B \\
		&= S^{-1}T TS p_B \\
		&= U B p_B \\
		&= Up_B
	\end{align*}
\end{proof}

 With the two previous lemmas we obtain a string of beads consisting of $4$ spinal spheres and whose identifications are given by $A$ and $U^3AU^{-3}$. Then, following the statement of Dutenhefner and Gusevskii in \cite{dutenhefner_complex_2004}, or adapting the argument of Maskit in \cite[VIII F, p. 200]{maskit_kleinian_1988}, we obtain:

\begin{prop}\label{prop:string_of_beads}
	The subgroup of $\Gamma_6$ generated by $A$ and $U^3AU^{-3}$ is a free subgroup. Its limit set is a topological circle.
\end{prop}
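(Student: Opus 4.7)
The plan is to apply the Klein--Maskit combination theorem in the variant allowing spheres paired by parabolic elements to be tangent at the parabolic fixed point, as used by Dutenhefner and Gusevskii in \cite{dutenhefner_complex_2004} and by Maskit in \cite[VIII F]{maskit_kleinian_1988}. Set $g_1 = A$ and $g_2 = U^3 A U^{-3}$. Both are parabolic unipotent, fixing $[p_A]$ and $U^3[p_A]$ respectively. By the previous lemma, $g_1$ pairs $\mathcal{J}_{-1}^{+}$ with $\mathcal{J}_0^{-}$, the two spinal spheres of the string meeting at $[p_A]$, and $g_2$ pairs $\mathcal{J}_{2}^{+}$ with $\mathcal{J}_3^{-}$, the two spheres meeting at $U^3[p_A]$.

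For freeness, denote by $B_k^{\pm} \subset \dhc{2}$ the closed topological $3$-ball cut out by the spinal sphere $\mathcal{J}_k^{\pm}$ on the side opposite to the Dirichlet domain; the four balls $B_0^{-}, B_{-1}^{+}, B_3^{-}, B_2^{+}$ have pairwise disjoint interiors and touch only at the four tangency points from \Cref{fig:string_of_beads}. I would verify the ping-pong inclusion $g_1(\dhc{2} \setminus \mathrm{int}(B_{-1}^{+})) \subset B_0^{-}$, and symmetrically for $g_1^{-1}$, $g_2$, $g_2^{-1}$. This is essentially built into the Dirichlet domain structure: since $\mathcal{J}_0^{-}$ is a face of the Dirichlet domain centered at $[p_U]$ and $\mathcal{J}_{-1}^{+}$ is its $A^{-1}$-image, the face pairing sends exterior to interior. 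The standard reduced-word argument then shows that any non-trivial reduced word $w = g_{i_n}^{\varepsilon_n} \cdots g_{i_1}^{\varepsilon_1}$ moves a basepoint in the common exterior of the four balls into one specific $B_k^{\pm}$ determined by $g_{i_n}^{\varepsilon_n}$, hence $w \neq \mathrm{Id}$.

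For the limit set being a topological circle, I would follow the Dutenhefner--Gusevskii template. The four tangency points lie cyclically on an unknotted topological circle $K_0 \subset \dhc{2}$ threading the beads. Iterating generates nested strings of beads: inside $B_{-1}^{+}$, the map $g_1^{-1}$ brings in scaled-down copies of the three other beads, subdividing the arc of $K_0$ contained in $B_{-1}^{+}$; doing the same in every bead yields $K_1 \subset K_0$. Continuing, we build a decreasing sequence $K_0 \supset K_1 \supset K_2 \supset \cdots$ of embedded circles whose complementary beads have diameters tending to zero, since the parabolic generators contract transversally toward their fixed points under iteration. The limit set $\Lambda_H$ of $H = \langle g_1, g_2 \rangle$ coincides with $\bigcap_n K_n$ and is homeomorphic to $S^1$ by the standard inverse-limit argument, as in \cite{dutenhefner_complex_2004}.

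The main obstacle is the rigorous verification of the ping-pong inclusions near the parabolic fixed points, where the two paired spheres touch and ordinary disjointness fails. One must either invoke Maskit's combination theorem as a black box for tangent interactive pairs, or verify directly, using the explicit matrix $A$ and its action on horospherical coordinates around $[p_A]$, that small horoball neighborhoods of the tangency points are respected by the pairings and that the contraction estimates needed for the diameters $\mathrm{diam}(K_n) \to 0$ along each arc actually hold.
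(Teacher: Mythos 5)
Your proposal is correct and follows essentially the same route as the paper: the paper gives no independent argument here, but derives the proposition directly from the string-of-beads lemmas by invoking the statement of Dutenhefner and Gusevskii in \cite{dutenhefner_complex_2004} or the Maskit combination argument of \cite[VIII F, p.~200]{maskit_kleinian_1988}, which is precisely the ping-pong-with-tangent-spheres and nested-beads scheme you describe. The technical points you flag (the ping-pong inclusions at the parabolic tangency points and the diameter estimates) are exactly what is being delegated to those references.
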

% I would like to find the argument to get a topological circle written properly somewhere...

In order to simplify a little the computations and be able to express all the elements in terms of $A$ and $B$, we will consider a group which is conjugate to the one defined in \Cref{prop:string_of_beads}. Let $\Gamma' < \Gamma_6$ be the subgroup generated by $B$ and $V^3BV^{-3}$. Since $B = TAT^{-1}$ and $V = TUT^{-1}$, the conclusions of \Cref{prop:string_of_beads} also hold for $\Gamma'$.
Moreover, since $V^3 = [A,B]$ and is of order $2$, we have $V^3BV^{-3} = [A,B]B[A,B] = AB^2A^{-1}B^{-1}$. The limit set of this group is indeed an $\RR$-circle, as proven in the following proposition.

\begin{prop}\label{prop:limit_set_R_circle}
	The limit set of $\Gamma'$ is an $\RR$-circle.
\end{prop}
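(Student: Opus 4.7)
The plan rests on a simple topological reduction: by \Cref{prop:string_of_beads}, $\Lambda_{\Gamma'}$ is a topological circle, and a closed connected proper subset of a topological circle is either a point or a closed arc, hence never a topological circle. Consequently, it is enough to produce an $\mathbb{R}$-circle $C \subset \dhc{2}$ which is invariant under $\Gamma'$: then $\Lambda_{\Gamma'} \subseteq C$, and the topological reduction forces equality.

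I would take $C$ to be the $\mathbb{R}$-circle passing through the four tangency points of the string of beads for $\Gamma'$. Conjugating the string of beads of $\langle A, U^3 A U^{-3}\rangle$ by $T$ yields four tangency points $t_1,t_2,t_3,t_4 \in \dhc{2}$ for $\Gamma'$ (the analogues of $[p_A]$, $U[p_B]$, $U^3[p_A]$, $U^{-2}[p_B]$), together with explicit lifts $\widetilde{t}_i \in \CC^3$ obtained from $p_A$, $p_B$ and iterated products of the matrices $S$, $T$, $U$, $V$. The first substantive step is to verify that the $t_i$ lie on a common $\mathbb{R}$-circle: equivalently, one checks the vanishing of the Cartan angular invariants $\mathbb{A}(\widetilde{t}_i, \widetilde{t}_j, \widetilde{t}_k) = \arg(-\langle \widetilde{t}_i, \widetilde{t}_j\rangle \langle \widetilde{t}_j, \widetilde{t}_k\rangle \langle \widetilde{t}_k, \widetilde{t}_i\rangle)$ for enough triples. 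Since the Parker--Will coefficients lie in $\QQ[i,\sqrt{2},\sqrt{3}]$, this reduces to a finite identity check in that number field.

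Next, I would establish that both generators of $\Gamma'$ preserve $C$. The cleanest route is to realize $C$ as the boundary of the unique real plane $P \subset \hc{2}$ with $\partial_\infty P = C$, and describe $P$ through its antiholomorphic involution $\iota_P: z \mapsto M\bar z$, where $M \in \mathrm{U}(2,1)$ is determined by $P$. One then checks by direct matrix computation that $\iota_P B \iota_P^{-1} = B$ and $\iota_P (V^3 B V^{-3}) \iota_P^{-1} = V^3 B V^{-3}$ in $\mathrm{PU}(2,1)$; equivalently that $M^{-1}BM = \bar B$ and $M^{-1}(V^3BV^{-3})M = \overline{V^3BV^{-3}}$ up to scalar. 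Both generators then commute with $\iota_P$, hence preserve $P$ and $C$, so $\Gamma'$ preserves $C$, giving $\Lambda_{\Gamma'}\subseteq C$ and finally $\Lambda_{\Gamma'} = C$.

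The main obstacle is computational rather than conceptual: producing the lifts $\widetilde{t}_i$, verifying the vanishing Cartan invariants, and checking the commutation identities with $\iota_P$ all amount to tedious but routine polynomial calculations in $\QQ[i,\sqrt{2},\sqrt{3}]$. No genuine geometric difficulty is expected, since the structure of the four tangency points is already pinned down by the string of beads, and the invariance reduces to a finite identity in a small number field.
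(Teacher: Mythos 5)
Your proposal is correct and follows essentially the same route as the paper: exhibit an explicit $\Gamma'$-invariant $\RR$-circle through points known to lie in $\Lambda_{\Gamma'}$, reduce the invariance to a finite computation in $\QQ[i,\sqrt{2},\sqrt{3}]$, and conclude equality from \Cref{prop:string_of_beads} via minimality of the limit set. The only (cosmetic) differences are that the paper pins down the $\RR$-circle using the three points $[p_B]$, $B_1[p_B]$, $B_1^{-1}[p_B]$ with real pairwise Hermitian products and checks invariance by expressing the generators' action as real linear combinations of these lifts, rather than using the tangency points and the antiholomorphic involution of the associated real plane.
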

\begin{proof}
	Let $B_1 = V^3BV^{-3}B = AB^2A^{-1}$, so $\{B,B_1\}$ is a system of generators for $\Gamma'$.
	First, let us characterize the $\RR$-circle $R_0$ that is going to be the limit set. Consider the points 
	\begin{align*}
		[p_B] &=
		\begin{bmatrix}
		0 \\ 0 \\ 1
		\end{bmatrix}
		&
		B_1[p_B] &=
		\begin{bmatrix}
		-16 \\
		4i \sqrt{6} \\
		3 + 6i\sqrt{3}
		\end{bmatrix}
		&
		B_1^{-1}[p_B] &=
		\begin{bmatrix}
		-16 \\
		8\sqrt{2} + 4i \sqrt{6}  \\
		7 + 2i\sqrt{3}
		\end{bmatrix}
	\end{align*}
	
	Observe that the lifts of $[p_B]$, $B_1[p_B]$ and $B_1^{-1}[p_B]$ in $\CC^3$ are linearly independent. We claim that they belong to the same $\RR$-circle.
	We have
		$\langle p_B , B_1p_B \rangle =
		\langle p_B , B_1^{-1}p_B \rangle = 
		-16$
		and
		$\langle B_1p_B , B_1^{-1}p_B \rangle = - 64$,
	so the three points lie in the same $\RR$-circle $R_0$. Furthermore, since the three Hermitian products are real, $R_0$ is the set of points of $\dhc{2}$ that can be written in the form $[x_1p_B + x_2B_1p_B + x_3B_1^{-1}p_B]$ with $x_1,x_2,x_3 \in \RR$.
	
	Now, we prove that $R_0$ is stable by $\Gamma'$. Let $x_1,x_2,x_3 \in \RR$ and $q = x_1p_B + x_2B_1p_B + x_3B_1^{-1}p_B$ be such that $[q] \in R_0$. 
	We only need to check that $B[q]$ and $B_1[q]$ belong to $R_0$.
	A straightforward (but tedious) computation gives:
	\begin{align*}
		B p_B &= p_B \\
		B B_1p_B &= 24p_B + 3B_1p_B -2 B_1^{-1}p_B\\
		B B_1^{-1}p_B &= 8p_B + 2B_1p_B - B_1^{-1}p_B
	\end{align*}
	Hence, $Bq$ is a linear combination of $p_B$, $B_1p_B$ and $B_1^{-1}p_B$ with real coefficients, and therefore $B_1[q]$ belongs to $R_0$.
	In the same way,
	\begin{align*}
	B_1 p_B &= B_1 p_B \\
	B_1^2p_B &= -3p_B - 3B_1p_B + B_1^{-1}p_B\\
	B_1 B_1^{-1}p_B &= p_B
	\end{align*}
	so $B_1q$ is a linear combination of $p_B$, $B_1p_B$ and $B_1^{-1}p_B$ with real coefficients, and therefore $B_1[q]$ belongs to $R_0$. Thus $R_0$ is stable by $\Gamma'$
	
	Consider now the limit set $\Lambda_{\Gamma'}$. Since $B$ is a parabolic element in $\Gamma'$ with fixed point $[p_B]$, we know that $[p_B] \in \Lambda_{\Gamma'}$, so $[p_B] \in R_0 \cap \Lambda_{\Gamma'}$.
	Thus, $R_0 \cap \Lambda_{\Gamma'}$ is a closed nonempty invariant subset for the action of $\Gamma'$ on $\dhc{2}$. Hence $\Lambda_{\Gamma'} \subset R_0$. But, by \Cref{prop:string_of_beads}, we know that $\Lambda_{\Gamma'}$ is a topological circle, so $\Lambda_{\Gamma'} = R_0$.
	 
\end{proof}

 Hence, the limit set of $\Gamma$ is the closure of the orbit of this $\RR$-circle by $\Gamma$, which leads to a situation similar to the one of the one described by Schwartz in \cite[Theorems 1.1 and 1.2]{schwartz_real_2003}, where the limit set is not all $\dhc{2}$ and is the union of a countable set of $\RR$-circles. The following corollary states that there are triples of points of $\Lambda_\Gamma$ lying in the same $\CC$-circle. The proof works as soon as the limit set contains an $\RR$-circle and another point, so th conclusion is also true for the limit set of the Schwartz group of \cite{schwartz_real_2003}.

\begin{cor}
	There exist two distinct points $[q_1],[q_2] \in \dhc{2}$ such that $[p_A]$, $[q_1]$ and $[q_2]$ lie in the same $\CC$-circle.
\end{cor}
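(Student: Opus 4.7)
My plan is to exhibit $[q_1]$ and $[q_2]$ using the antiholomorphic involution attached to the real plane bounded by $R_0$. Setting $v_1 = p_B$, $v_2 = B_1 p_B$, $v_3 = B_1^{-1} p_B$, these three vectors form a $\CC$-basis of $\CC^3$ by \Cref{prop:limit_set_R_circle}, and the Hermitian products $\langle v_i, v_j \rangle$ computed in that proof are all real. Hence coefficient conjugation in the basis $(v_i)$ defines an anti-linear involution $\sigma$ of $\CC^3$ satisfying $\Phi(\sigma z, \sigma w) = \overline{\Phi(z,w)}$. It descends to an antiholomorphic isometry of $\cp{2}$ whose fixed set is the real projective plane $\mathbb{P}(V_\RR)$, with $V_\RR = \mathrm{Span}_\RR(v_1, v_2, v_3)$, and whose fixed set on $\dhc{2}$ is precisely $R_0$.

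I would first write $p_A = a_1 v_1 + a_2 v_2 + a_3 v_3$ and verify that the $a_i$ are not all real multiples of a common complex scalar, so that $[p_A] \notin R_0$ and $[p_A'] := \sigma([p_A])$ is distinct from $[p_A]$. The complex line $\tilde L$ through $[p_A]$ and $[p_A']$ is then automatically $\sigma$-invariant.

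The main technical step will be to show that $\tilde L$ meets $\hc{2}$, equivalently that $\tilde L$ is not the tangent complex line to $\dhc{2}$ at $[p_A]$, equivalently that $\langle p_A, p_A' \rangle \neq 0$. This reduces to evaluating a quadratic form in $(a_1, a_2, a_3)$ whose matrix is the real Gram matrix of $\Phi$ in the basis $(v_i)$, and I expect this to be the only place requiring an explicit numerical computation. Once it is verified, $C := \tilde L \cap \dhc{2}$ is a genuine $\CC$-circle, $\sigma$-invariant, and $\tilde L \cap \hc{2}$ is a complex geodesic on which $\sigma$ restricts to an antiholomorphic isometry.

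This restriction cannot be the identity, since a complex geodesic cannot be contained in the totally real plane bounded by $R_0$ (their tangent spaces are $J$-invariant and totally real respectively, so they intersect trivially). Consequently $\sigma$ acts on $\tilde L \cap \hc{2}$ as a reflection in a real geodesic; the two boundary points of that geodesic lie in $C \cap R_0$ and provide $[q_1]$ and $[q_2]$. Together with $[p_A]$ they lie on the common $\CC$-circle $C$, as required. The argument only uses that the limit set contains an $\RR$-circle and a point outside the corresponding real plane, which accounts for the remark that the same conclusion applies to the Schwartz group of \cite{schwartz_real_2003}.
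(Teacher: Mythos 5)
Your argument is correct, but it takes a genuinely different route from the paper's. The paper works in the Siegel/Heisenberg model with $[p_A]$ placed at infinity and invokes Goldman's theorem that a finite $\RR$-circle projects to a lemniscate under the vertical projection $\pi\colon \CC\times\RR\to\CC$; the two points of $R_0$ lying over the node of the lemniscate sit on the vertical $\CC$-circle through $[p_A]$, and that is the whole proof. You instead use the antiholomorphic reflection $\sigma$ fixing the real plane bounded by $R_0$ and intersect the $\sigma$-invariant complex line through $[p_A]$ and $\sigma([p_A])$ with the fixed locus. Two remarks that streamline your version. First, your ``main technical step'' $\langle p_A,\sigma p_A\rangle\neq 0$ is automatic and needs no numerical work: in signature $(2,1)$ the orthogonal complement of a null vector $p$ meets the null cone only along $\CC p$, so two \emph{distinct} boundary points are never orthogonal, and the line through them always meets $\hc{2}$. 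Second, the only computation you genuinely need is the one you flag at the outset, namely $[p_A]\notin\mathbb{P}(V_\RR)$; since $\mathrm{Fix}(\sigma)\cap\dhc{2}=R_0$, this is exactly the condition $[p_A]\notin R_0$ that the paper also uses implicitly, and it does check out (writing a lift of $[p_A]$ as $x_1p_B+x_2B_1p_B+x_3B_1^{-1}p_B$ with $x_i\in\RR$, the second coordinate forces $x_2=x_3=0$ and the third then forces $x_1=0$, a contradiction). You can also shortcut the tangent-space argument: the restriction of $\sigma$ to the complex geodesic is not the identity simply because it swaps the two distinct boundary points $[p_A]$ and $\sigma([p_A])$. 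As for what each approach buys: the paper's is a two-line deduction from a classical fact about $\RR$-circles in Heisenberg coordinates, while yours is coordinate-free, avoids the lemniscate theorem, and makes completely transparent the generalization the paper only remarks on, namely that any $\RR$-circle in the limit set together with any limit point off it produces such a triple.
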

	
\begin{proof}
	Consider the Siegel model $\CC \times \RR \cup \{\infty\}$ for $\dhc{2}$, where $[p_A]$ is identified with the point at infinity. By \Cref{prop:limit_set_R_circle}, we know that there is an $\RR$-circle $R_0 \subset \Lambda_\Gamma$ and not passing through $[p_A]$.
	Let $\pi: \CC \times \RR \to \CC$ be the first projection.
	By \cite[Theorem 4.4.7]{goldman} $\pi(R_0)$ is a lemniscate in $\CC$, so there are two distinct points $[q_1], [q_2] \in R_0$ such that $\pi([q_1]) = \pi([q_2]) = z_0$ is the double point of the lemniscate. But $\pi^{-1}(z_0) \cup \{[p_A]\}$ is precisely the $\CC$-circle passing through $[p_A]$ and $(z_0,0)$. Therefore, $[p_A]$, $[q_1]$ and $[q_2]$ lie in the same $\CC$-circle.
\end{proof}

An interesting consequence of this fact is that the group $\Gamma_6$, as well as the group $\Gamma_{(4,4,4,7)}$ considered by Schwartz in \cite{schwartz_real_2003}, cannot be the image of a $(1,1,2)$-hyperconvex representation of a hyperbolic group, as defined by Pozzetti, Sambarino an Wienhard in \cite{pozzetti_conformality_2019}. 

\subsection{The limit set is connected}
 
 Using the $\RR$-circle constructed above, we are going to prove that $\Lambda_\Gamma$ is connected ; the proof will be similar to the one of the connectedness of the limit set described by Schwartz in \cite{schwartz_real_2003}.
 Let $\Gamma'$ be the subgroup of $\Gamma$ generated by $B$ and $V^3 B V^{-3}$, as in the previous subsection. Let $R_0$ be its limit set, which is an $\RR$-circle by \Cref{prop:limit_set_R_circle}. Since $V^3 = [A,B]$ is of order $2$, we have $V^3BV^{-3} = [A,B]B[A,B] = AB^2A^{-1}B^{-1}$, so $\Gamma' = \langle B, AB^2A^{-1} \rangle$ and is in fact a subgroup of $\langle A ,B \rangle$.
 
 \begin{lemme}\label{lemma:R0_cap_CR0_nonempty}
 	Let $C \in \{ A , A^{-1} , B , B^{-1} \}$. Then $R_0 \cap CR_0 \neq \emptyset$.
 \end{lemme}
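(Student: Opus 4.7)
The plan is to split into cases according to whether $C \in \{B, B^{-1}\}$ or $C \in \{A, A^{-1}\}$, and to exploit the fact that $R_0$, being the limit set of $\Gamma'$, is $\Gamma'$-invariant.

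For $C = B^{\pm 1}$, the argument is immediate: since $B \in \Gamma'$, we have $B R_0 = R_0$ and hence $R_0 \cap B R_0 = R_0$, which is nonempty (for instance it contains $[p_B]$). The same applies to $B^{-1}$.

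For $C = A^{\pm 1}$, the idea is to find a single point lying in both $R_0$ and $A R_0$. The crucial observation is that the second generator of $\Gamma'$ is $A B^2 A^{-1}$, which is a nontrivial parabolic element of $\Gamma'$ whose fixed point is $A[p_B]$ (since $B^2$ is parabolic with fixed point $[p_B]$, and conjugation by $A$ translates fixed points). The fixed points of parabolic elements of $\Gamma'$ lie in the limit set $\Lambda_{\Gamma'} = R_0$, so $A[p_B] \in R_0$. On the other hand, $[p_B] \in R_0$ as well (fixed point of the other generator $B$), and thus $A[p_B] \in A R_0$. Consequently $A[p_B] \in R_0 \cap A R_0$, so this intersection is nonempty. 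Applying $A^{-1}$ to the relation $A[p_B] \in R_0 \cap A R_0$ yields $[p_B] \in A^{-1} R_0 \cap R_0$, so the case $C = A^{-1}$ follows as well.

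There is no real obstacle here: once one notices that both $B$ and $AB^2A^{-1}$ are parabolic generators of $\Gamma'$ with fixed points $[p_B]$ and $A[p_B]$, the point $A[p_B]$ is the explicit witness for the $A$-case, and $\Gamma'$-invariance of $R_0$ takes care of the $B$-case.
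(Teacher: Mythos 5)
Your proof is correct and follows essentially the same route as the paper: the $B^{\pm1}$ cases via $\Gamma'$-invariance of $R_0$, and the $A^{\pm1}$ cases via the observation that the parabolic generators $B$ and $AB^2A^{-1}$ of $\Gamma'$ place both $[p_B]$ and $A[p_B]$ in $R_0$, so that $A[p_B]$ witnesses $R_0\cap AR_0\neq\emptyset$. The only cosmetic difference is that the paper obtains $A[p_B]\in AR_0$ by identifying $AR_0$ with the limit set of $A\Gamma'A^{-1}$, whereas you simply apply $A$ to the membership $[p_B]\in R_0$; both are fine.
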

 \begin{proof}
 	Since $R_0$ is stable by $B$, we have nothing to prove if $C = B$ or if $C = B^{-1}$.
 	For the other cases, observe that $B$ and $AB^2A^{-1}$ are parabolic elements of $\Gamma'$, so their fixed points belong to $R_0$. Since $[p_{AB^2A^{-1}}] = A [p_{B^2}] = A [p_B]$, we have that $\{[p_B] , A[p_B]\} \subset R_0$. 
 	
 	Now, $A R_0$ is the limit set of $A\Gamma' A^{-1} = \langle ABA^{-1}, A^2B^2A^{-2} \rangle$, and therefore contains $A[p_B]$. Hence $AR_0 \cap R_0 \neq \emptyset$.
 	In the same way, $A^{-1} R_0$ is the limit set of $A^{-1}\Gamma' A = \langle A^{-1}BA, B^2 \rangle$, and therefore contains $[p_{B^2}] = [p_B]$. Hence $A^{-1}R_0 \cap R_0 \neq \emptyset$.
 \end{proof}

 \begin{prop}\label{prop:sequence_R_circles}
 	Let $C\in \langle A ,B \rangle$. Then, there exist $m \in \NN$ and $\RR$-circles $R_0, \dots , R_m$ contained in $\Lambda_\Gamma$ such that $[p_B] \in R_0$, $C[p_B] \in R_m$ and for all $0 \leq i \leq m-1$ $R_i \cap R_{i+1} \neq \emptyset$.
 \end{prop}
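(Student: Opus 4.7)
The plan is to prove this by a direct chain construction, using \Cref{lemma:R0_cap_CR0_nonempty} as the inductive engine. Since $\langle A, B\rangle$ is generated by $\{A, A^{-1}, B, B^{-1}\}$, I first write $C$ as a word $C = D_1 D_2 \cdots D_n$ with each letter $D_j \in \{A^{\pm 1}, B^{\pm 1}\}$. I then define, for each $0 \leq i \leq n$, the set
\[
R_i := D_1 D_2 \cdots D_i \, R_0,
\]
with the convention that the empty product is the identity, so $R_0$ agrees with the original $\RR$-circle from \Cref{prop:limit_set_R_circle}, and $R_n = C R_0$.

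Next I verify the three properties. Each $R_i$ is an $\RR$-circle because $\mathrm{PU}(2,1)$ acts on $\dhc{2}$ sending real planes to real planes, hence $\RR$-circles to $\RR$-circles. Each $R_i$ is contained in $\Lambda_\Gamma$ because $R_0 \subset \Lambda_\Gamma$ and $\Lambda_\Gamma$ is $\Gamma$-invariant. For the endpoints, $[p_B] \in R_0$ by construction (as noted in the proof of \Cref{lemma:R0_cap_CR0_nonempty}, the fixed point of $B$ belongs to $R_0$), and $C[p_B] = D_1 \cdots D_n [p_B] \in D_1 \cdots D_n R_0 = R_n$.

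For the consecutive intersection condition, I use the equivariance of translation: for each $0 \leq i \leq n-1$,
\[
R_i \cap R_{i+1} \;=\; (D_1 \cdots D_i)\bigl(R_0 \cap D_{i+1} R_0\bigr),
\]
and the right-hand side is nonempty by \Cref{lemma:R0_cap_CR0_nonempty} applied with $C = D_{i+1}$. Setting $m := n$ finishes the proof.

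I do not expect any serious obstacle here: the content of the proposition has effectively been encoded into \Cref{lemma:R0_cap_CR0_nonempty}, and the remaining work is the bookkeeping of turning a word in the generators into a chain of $\RR$-circles. The only thing to be careful about is the invariance properties used implicitly (that $\mathrm{PU}(2,1)$ preserves the class of $\RR$-circles, and that $\Lambda_\Gamma$ is $\Gamma$-invariant), both of which are standard.
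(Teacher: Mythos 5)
Your proof is correct and follows essentially the same route as the paper: write $C$ as a word in $\{A^{\pm1},B^{\pm1}\}$, translate $R_0$ along the prefixes of that word, and apply \Cref{lemma:R0_cap_CR0_nonempty} to each consecutive pair via equivariance. The extra care you take in checking that the $R_i$ are $\RR$-circles contained in $\Lambda_\Gamma$ is left implicit in the paper but is exactly the right bookkeeping.
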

 \begin{proof}
 	Let $m \in \NN$ such that there exist $C_1, \dots , C_m \in \{A , A^{-1} , B , B^{-1} \}$ with $C = C_1 \cdots C_m$. Let $R_0$ be the limit set of $\Gamma'$ and, for $1 \leq i \leq m$, let $R_i = C_1 \cdots C_i R_0$. Thus, for all $0 \leq i \leq m$, $R_i$ is an $\RR$-circle contained in $\Lambda_\Gamma$, and $C[p_B] \in R_m$. Now, let $i \in \{0, \dots , m-1 \}$. Then, 
 	\begin{align*}
 		R_i \cap R_{i+i} 
 		&= C_1 \cdots C_i R_0 \cap C_1 \cdots C_{i+1} R_0 \\
 		&= C_1 \cdots C_i (R_0 \cap C_{i+1} R_0).
 	\end{align*}
 	By \Cref{lemma:R0_cap_CR0_nonempty}, $R_0 \cap C_{i+1} R_0 \neq \emptyset$, so $R_i \cap R_{i+1} \neq \emptyset$.
 \end{proof}
 
 With the last proposition, we are able establish the connectedness of $\Lambda_\Gamma$.
 
 \begin{prop}
 	The limit set $\Lambda_\Gamma$ is connected and the closure in $\dhc{2}$ of a countable union of $\RR$-circles.
 \end{prop}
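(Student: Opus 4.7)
The plan is to exhibit $\Lambda_\Gamma$ as the closure of the countable set
\[
\mathcal{R} = \bigcup_{\gamma \in \langle A,B\rangle} \gamma R_0,
\]
and then to show that $\mathcal{R}$ is itself connected. Each translate $\gamma R_0$ is again an $\RR$-circle, because $\mathrm{PU}(2,1)$ permutes the totally real totally geodesic planes of $\hc{2}$ and hence their boundary circles at infinity. Since $\langle A, B\rangle$ is countable (as a subgroup of a finitely generated group), $\mathcal{R}$ is a countable union of $\RR$-circles.

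To identify $\overline{\mathcal{R}}$ with $\Lambda_\Gamma$, I would argue both inclusions. The inclusion $\mathcal{R} \subseteq \Lambda_\Gamma$ is immediate: by \Cref{prop:limit_set_R_circle} we have $R_0 \subseteq \Lambda_\Gamma$, and $\Lambda_\Gamma$ is closed and $\Gamma$-invariant, so it contains every $\gamma R_0$. For the reverse inclusion, by \Cref{prop:AB_finite_index} the subgroup $\langle A, B\rangle$ has finite index in $\Gamma_6$ and therefore shares its limit set, $\Lambda_{\langle A,B\rangle} = \Lambda_\Gamma$. The point $[p_B]$ is the parabolic fixed point of $B$, so it lies in this common limit set; by the minimality property of $\Lambda_\Gamma$ recalled in \Cref{sect:geom_background} (it is the smallest closed, nonempty, invariant subset of $\dhc{2}$), the $\langle A,B\rangle$-orbit of $[p_B]$ is dense in $\Lambda_\Gamma$. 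Since $[p_B] \in R_0 \subseteq \mathcal{R}$, this dense orbit sits inside $\mathcal{R}$, yielding $\Lambda_\Gamma \subseteq \overline{\mathcal{R}}$.

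Connectedness will then reduce to showing that $\mathcal{R}$ itself is connected, since the closure of a connected set is connected. For each $\gamma \in \langle A, B\rangle$, \Cref{prop:sequence_R_circles} produces a finite chain of $\RR$-circles $R_0, R_1, \dots, R_m$ in $\Lambda_\Gamma$ with $\gamma[p_B] \in R_m$ and $R_i \cap R_{i+1} \neq \emptyset$; from the proof of that proposition, each $R_i$ has the form $C_1 \cdots C_i R_0$ with $C_j \in \{A^{\pm 1}, B^{\pm 1}\}$, so the entire chain lies in $\mathcal{R}$. Each $R_i$ is connected, and a straightforward induction on the chain length shows that any finite union of connected sets in which consecutive ones meet is connected. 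Hence $\gamma R_0$ always lies in the connected component of $R_0$ in $\mathcal{R}$, forcing $\mathcal{R}$ to be connected, and therefore so is $\Lambda_\Gamma = \overline{\mathcal{R}}$.

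There is no real obstacle beyond bookkeeping here: the genuinely delicate parts (the identification of $R_0$ as an $\RR$-circle, the chain-of-circles argument reaching every $\langle A,B\rangle$-translate) are precisely the content of \Cref{prop:limit_set_R_circle} and \Cref{prop:sequence_R_circles}. The only small points to check are that the chain in \Cref{prop:sequence_R_circles} is built from $\langle A,B\rangle$-translates of $R_0$ (manifest from its proof) and that density of a limit-point orbit follows from the minimality of $\Lambda_\Gamma$; no further complex hyperbolic computation is needed.
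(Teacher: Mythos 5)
Your proposal is correct and follows essentially the same route as the paper: density of the $\langle A,B\rangle$-orbit of $[p_B]$ (via \Cref{prop:AB_finite_index} and minimality of the limit set) identifies $\Lambda_\Gamma$ with the closure of the translates of $R_0$, and the chain of intersecting $\RR$-circles from \Cref{prop:sequence_R_circles} gives connectedness of that union, hence of its closure. The only cosmetic difference is that you work with the $\langle A,B\rangle$-orbit of $R_0$ throughout while the paper passes to the full $\Gamma$-orbit; both are countable unions of $\RR$-circles with the same closure.
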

 \begin{proof}
 	By \Cref{prop:AB_finite_index}, we know that $\langle A,B \rangle$ has finite index in $\Gamma$. Since $[p_B] \in \Lambda_\Gamma$, the orbit of $[p_B]$ by $\langle A,B \rangle$ is therefore dense in $\Lambda_\Gamma$. Hence, since $[p_B] \in R_0 \subset \Lambda_\Gamma$, the limit set $\Lambda_\Gamma$ is the closure in $\dhc{2}$ of the $\Gamma$-orbit of $R_0$, so it is the closure of a countable union of $\RR$-circles.
 	Now, by \Cref{prop:sequence_R_circles}, if $C \in \langle A,B \rangle$, there is a path in $\Lambda_\Gamma$ from $[p_B]$ to $C[p_B]$. Hence, the set $\Gamma R_0$ is path-connected. Since $\Lambda_\Gamma$ is the closure of $\Gamma R_0$, it is connected.
 \end{proof}

\subsection{A Hopf link in the limit set}
 It only remains to establish two topological facts about the limit set and the domain of discontinuity of $\Gamma$. We begin by proving that the limit set contains linked $\RR$-circles, and therefore a Hopf link with three components.
  
\begin{lemme}\label{lemma:R-circle_linked_with_axes_of_V}
	The $\RR$-circle $R_0$ is linked with the two invariant $\CC$-circles for $V$.
\end{lemme}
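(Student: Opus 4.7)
The plan is to identify both $V$-invariant $\CC$-circles explicitly and then to apply the criterion recalled in \Cref{sect:geom_background}: an $\RR$-circle and a $\CC$-circle are linked in $\dhc{2}$ if and only if the real plane and the complex geodesic they bound meet inside $\hc{2}$.

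Since $V = TS^{-1} \in \mathrm{SU}(2,1)$ is regular elliptic of order $6$, it has three simple eigenvalues on the unit circle. Exactly one eigenvector, namely $p_V$, has negative $\Phi$-norm and yields the fixed point $[p_V] \in \hc{2}$; the other two, which I call $v_1$ and $v_2$, are of positive norm. Because eigenvectors of a unitary map associated to distinct eigenvalues are $\Phi$-orthogonal, for each $i \in \{1,2\}$ the complex line
\[
L_i = \bigl\{ [z] \in \cp{2} : \langle v_i, z \rangle = 0 \bigr\}
\]
is $V$-invariant, passes through $[p_V]$, and is a complex geodesic of $\hc{2}$. The two $V$-invariant $\CC$-circles are precisely the ideal boundaries $\partial L_1, \partial L_2$ (the third invariant plane $\operatorname{span}_\CC(v_1,v_2)$ has signature $(2,0)$ and does not meet $\hc{2}$), so both must be shown to be linked with $R_0$.

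The proof of \Cref{prop:limit_set_R_circle} already provides three lifts $p_B$, $B_1 p_B$, $B_1^{-1} p_B$ of points of $R_0$ whose pairwise Hermitian products are real; the real plane $P_0$ bounded by $R_0$ is therefore
\[
P_0 = \bigl\{ [x_1 p_B + x_2 B_1 p_B + x_3 B_1^{-1} p_B] : (x_1, x_2, x_3) \in \RR^3,\ \Phi(x_1 p_B + x_2 B_1 p_B + x_3 B_1^{-1} p_B) < 0 \bigr\}.
\]
For each $i \in \{1,2\}$ it then suffices to exhibit $(x_1, x_2, x_3) \in \RR^3 \setminus \{0\}$ satisfying the single complex equation
\[
\langle v_i,\, x_1 p_B + x_2 B_1 p_B + x_3 B_1^{-1} p_B \rangle = 0
\]
(two real linear equations in three real unknowns, hence a line of solutions) and then to check that a representative on this line has negative $\Phi$-norm. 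Any such solution is a point of $L_i \cap P_0 \subset \hc{2}$, which by the criterion above establishes that $R_0$ and $\partial L_i$ are linked.

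The main obstacle is the concrete computation: finding the eigenvectors $v_1, v_2$ of $V$ and then the six Hermitian products $\langle v_i, p_B \rangle$, $\langle v_i, B_1 p_B \rangle$, $\langle v_i, B_1^{-1} p_B \rangle$. These entries live in $\QQ[i,\sqrt{2},\sqrt{3}]$ and are of the same flavor as the arithmetic already carried out in the proof of \Cref{prop:limit_set_R_circle}; once the products are in hand, solving a $2 \times 3$ real linear system and checking the sign of $\Phi$ on the resulting vector is a routine finite verification that completes the argument.
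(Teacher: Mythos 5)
Your overall strategy is sound and rests on the same criterion the paper uses (an $\RR$-circle and a $\CC$-circle are linked if and only if the real plane and the complex geodesic they bound meet inside $\hc{2}$), and your identification of the two invariant $\CC$-circles as the boundaries of the polar lines of the positive eigenvectors $v_1,v_2$ of $V$ is correct. However, as written the argument has a genuine gap: the step you call a ``routine finite verification'' --- checking that the real line of solutions of $\langle v_i, x_1 p_B + x_2 B_1 p_B + x_3 B_1^{-1} p_B \rangle = 0$ actually contains a vector of negative $\Phi$-norm --- is precisely the content of the lemma, not an afterthought. The restriction of $\Phi$ to the totally real span of $p_B$, $B_1 p_B$, $B_1^{-1} p_B$ has signature $(2,1)$, so the solution line could perfectly well consist only of vectors of non-negative norm; that is exactly the unlinked case. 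Until that sign is computed for each $i$ (which also requires first computing $v_1$ and $v_2$ explicitly, which you do not do), nothing is established.

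The paper's proof short-circuits all of this with one observation: both invariant complex geodesics of the regular elliptic element $V$ pass through its fixed point $[p_V] \in \hc{2}$, so it suffices to check that this single point lies in the real plane bounded by $R_0$. That is the identity $p_V = -\tfrac{7}{4}p_B - \tfrac{3}{8}B_1 p_B + \tfrac{1}{8}B_1^{-1}p_B$ together with $\langle p_V , p_V \rangle = -8 < 0$: one real linear combination settles both $\CC$-circles simultaneously, with no eigenvector computation for $v_1,v_2$ and no linear system to solve. You should either carry out your two computations in full or, better, replace them by this single verification at the fixed point.
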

\begin{proof}
	Consider the point $[p_V] = \left[\begin{smallmatrix}
	4 \\
	\sqrt{2} -i  \sqrt{6}  \\
	-2 -2 i \sqrt{3} 
	\end{smallmatrix}\right]$. 
	We have that $p_V$ is an eigenvector with eigenvalue $1$ for $V$, and $\langle p_V,p_V \rangle = -8$, so $[p_V] \in \hc{2}$ is the fixed point of $V$ in $\hc{2}$.
	
	We also have that $p_V = -\frac{7}{4}p_B - \frac{3}{8}B_1p_B + \frac{1}{8}B_1^{-1}p_B$, so it belongs to the $\RR$-plane spanned by $p_B$, $B_1p_B$ and $B_1^{-1}p_B$. Thus, both axes of $V$ intersect the $\RR$-plane at $[p_V]$, so, at the boundary at infinity, $R_0$ is linked with the boundaries of these complex axes, which are precisely the two invariant $\CC$-circles for $V$.
\end{proof}

\begin{prop}
	The limit set $\Lambda_{\Gamma}$ contains a Hopf link with three components.
\end{prop}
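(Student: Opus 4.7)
The strategy is to produce three pairwise-linked $\RR$-circles contained in $\Lambda_\Gamma$ as images of the circle $R_0$ of \Cref{prop:limit_set_R_circle} under the elliptic element $V$. Concretely, I claim that $R_0$, $VR_0$ and $V^2R_0$ form the desired three-component Hopf link in $\Lambda_\Gamma$.

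Since $V\in \Gamma$ and $\Lambda_\Gamma$ is $\Gamma$-invariant, $VR_0$ and $V^2R_0$ are $\RR$-circles contained in $\Lambda_\Gamma$; their bounding real planes are $V\pi_0$ and $V^2\pi_0$, where $\pi_0\subset\hc{2}$ is the real plane bounded by $R_0$. The linking step is an immediate consequence of the key observation already established in the proof of \Cref{lemma:R-circle_linked_with_axes_of_V}: the identity $p_V = -\tfrac{7}{4}p_B - \tfrac{3}{8}B_1p_B + \tfrac{1}{8}B_1^{-1}p_B$ shows that $[p_V]\in\pi_0$. Since $V$ fixes $[p_V]$, we also have $[p_V]\in V\pi_0$ and $[p_V]\in V^2\pi_0$, so any two of the three real planes intersect in $\hc{2}$. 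By the linking criterion recalled in \Cref{sect:geom_background} (two such boundary circles are linked if and only if the corresponding totally geodesic subspaces meet in $\hc{2}$), the three circles $R_0$, $VR_0$, $V^2R_0$ are pairwise linked.

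It then remains to verify distinctness, i.e.\ that neither $V$ nor $V^2$ preserves $R_0$. Since $[p_B]\in R_0$, it suffices to show that $V[p_B]\notin R_0$ and $V^2[p_B]\notin R_0$. By the explicit characterization of $R_0$ given in the proof of \Cref{prop:limit_set_R_circle}, the points of $R_0$ are precisely the projectivizations of real linear combinations of $p_B$, $B_1p_B$ and $B_1^{-1}p_B$; hence this reduces to computing $Vp_B$ and $V^2p_B$ from the matrices in \Cref{sect:the_uniformization} and checking that the resulting vectors are not in the real span of $\{p_B,B_1p_B,B_1^{-1}p_B\}$ (equivalently, that their unique complex coordinates in this basis have a nonreal coordinate).

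The main obstacle is this last distinctness verification, which is a short but concrete linear-algebra calculation with the explicit matrices of $V$, $B$ and $B_1=AB^2A^{-1}$. Everything else is geometric: it follows from the $\Gamma$-invariance of $\Lambda_\Gamma$, the presence of $[p_V]$ in $\pi_0$ (already extracted in \Cref{lemma:R-circle_linked_with_axes_of_V}), and the linking criterion from \Cref{sect:geom_background}.
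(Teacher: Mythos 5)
Your proof is correct and follows essentially the same route as the paper: the paper also takes the three circles $R_0$, $VR_0$, $V^2R_0$ and deduces their pairwise linking from \Cref{lemma:R-circle_linked_with_axes_of_V}, that is, from the fact that the fixed point $[p_V]$ lies in the real plane bounded by $R_0$ and hence in all three (pairwise intersecting) real planes. Your write-up is if anything slightly more careful than the paper's, since you make the linking criterion explicit and flag the routine distinctness check ($VR_0\neq R_0$) that the paper leaves implicit.
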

\begin{proof}
	Let $R_0$ be the $\RR$-circle which is the limit set of the subgroup $\Gamma' = \langle B, V^3BV^{-3} \rangle$, as in \Cref{prop:limit_set_R_circle}.
	Let $R_1$ and $R_2$ be the $\RR$-circles $V R_0$ and $V^2R_0$ respectively, which are the limit sets of the groups $V\Gamma'V^{-1}$ and $V^2\Gamma'V^{-2}$.
	By \Cref{lemma:R-circle_linked_with_axes_of_V}, we know that $R_0$ is linked with the axes of $V$. Hence, the circles $R_0$, $R_1$ and $R_2$ form a Hopf link with three components.
	\end{proof}

\subsection{The fundamental group of $\Omega_\Gamma$ is not finitely generated.}	
	Finally, we will use the $\RR$-circles of the limit set and the fact that $M_6$ is a cusped manifold to prove that $\pi_1(\Omega_\Gamma)$ is not finitely generated.
	Since $M_6$ is a cusped hyperbolic manifold uniformized by $\Gamma_6$, 
	\Cref{lemma:cylinder_bounds_horotube} gives immediately:
\begin{lemme}
	The limit set $\Lambda_\Gamma$ is contained in the complement of a $B$-horotube $H$ based at $[p_B]$.
\end{lemme}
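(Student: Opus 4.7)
The plan is to apply \Cref{lemma:cylinder_bounds_horotube} along the lines of the proof of \Cref{prop:cusps_are_horocusps}. The Parker--Will description recalled at the start of \Cref{sect:the_uniformization} identifies $B$ as (the image of) a generator of the peripheral holonomy of the single cusp of $M_6$: the matrix $B$ is lower triangular with $1$s on the diagonal, hence a regular unipotent parabolic with fixed point $[p_B]$. Pick a cusp neighborhood $C$ of $M_6$ and a lift $\widetilde{C} \subset \widetilde{M_6}$ as in \Cref{subsect:cusps_are_horocusps}, and write $\pi_1(T) = \langle l, m \rangle$ with $\rho(l) = B$ and $m$ generating $\ker(\rho|_{\pi_1(T)})$. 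Then $\langle m \rangle \backslash \partial \widetilde{C}$ is an annulus which the developing map embeds as a $B$-invariant cylinder $L \subset \dhc{2}$, by uniformizability.

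By \Cref{lemma:cylinder_bounds_horotube}, $L$ separates $\dhc{2} \setminus \{[p_B]\}$ into a $B$-horotube $H$ and a component $K$ with $\langle B \rangle \backslash (K \cup L)$ compact. The argument already carried out in the proof of \Cref{prop:cusps_are_horocusps}---using that $C$ is not contained in any compact set while $\langle B \rangle \backslash (K \cup L)$ is compact---forces $\dev(\widetilde{C}) = H$, and in particular $H \subset \Omega_\Gamma$. Hence $\Lambda_\Gamma \cap H = \emptyset$, so $\Lambda_\Gamma$ lies in the complement of $H$, as required.

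No genuine obstacle arises; the only thing to verify carefully is that $B$ (rather than a conjugate such as $A = T^{-1} B T$) is the peripheral generator, which is built into the Parker--Will uniformization. Everything else is packaged by \Cref{prop:cusps_are_horocusps}, which is the reason the author remarks that the conclusion follows immediately.
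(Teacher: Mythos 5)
Your proof is correct and takes essentially the same route the paper intends: the lemma is stated there without proof, being attributed directly to \Cref{lemma:cylinder_bounds_horotube} via the cusped uniformization, which is exactly the argument of \Cref{prop:cusps_are_horocusps} that you unpack. The one point you flag is harmless anyway: even if the peripheral generator were only a conjugate $gBg^{-1}$, the resulting horotube in $\Omega_\Gamma$ could be translated by $g^{-1}$ (using the $\Gamma$-invariance of $\Omega_\Gamma$) to a $B$-horotube based at $[p_B]$.
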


Let $\gamma$ be a loop going once around the horotube $H$ of the previous lemma. We prove the two following lemmas in order to have all the tools for showing that $\pi_1(\Omega_\Gamma)$ is not finitely generated.

\begin{lemme}
	The curve $\gamma$ is linked once around $R_0$ and it is nontrivial in $\pi_1(\Omega_\Gamma)$.
\end{lemme}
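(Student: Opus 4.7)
The plan is to establish the linking statement by a local analysis near the cusp point $[p_B]$, and then to deduce the nontriviality of $[\gamma]$ in $\pi_1(\Omega_\Gamma)$ from the linking via a standard inclusion argument into $\pi_1(\dhc{2} \setminus R_0)$.

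For the linking number, I would work in a Heisenberg chart placing $[p_B]$ at infinity, so that $B$ acts as a horizontal Heisenberg translation and the horotube $H$ becomes a $B$-invariant open set whose complement contains an unbounded tube around the translation axis of $B$. The closure $\overline{\dhc{2} \setminus H}$ is then a solid torus in $\dhc{2} \simeq S^3$ whose meridian is $\gamma$ and whose longitudinal direction is that of $B$. By the preceding lemma, $R_0 \subset \Lambda_\Gamma \subset \dhc{2} \setminus H$, so $R_0$ is contained in this solid torus. Since $R_0$ is $B$-invariant, parameterizing $R_0 \cong S^1$ so that $B$ acts by a unit shift, the Heisenberg $z$-coordinate along $R_0$ differs from the shift parameter by a bounded amount; thus the two ends of $R_0 \setminus \{[p_B]\}$ escape to opposite ends of the cylindrical complement $\dhc{2} \setminus H$. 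Consequently $R_0$ represents a generator of $H_1\bigl(\overline{\dhc{2} \setminus H}\bigr) \simeq \ZZ$, and the algebraic intersection of $R_0$ with a meridian disk bounded by $\gamma$ equals $\pm 1$, i.e.\ $\mathrm{lk}(\gamma, R_0) = \pm 1$.

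For the nontriviality: since $R_0 \subset \Lambda_\Gamma$, we have $\Omega_\Gamma \subset \dhc{2} \setminus R_0$. The $\RR$-circle $R_0$ is an unknotted circle in $S^3 = \dhc{2}$, so $\pi_1(\dhc{2} \setminus R_0) \cong \ZZ$ with a meridian of $R_0$ as generator. The inclusion $\Omega_\Gamma \hookrightarrow \dhc{2} \setminus R_0$ induces a homomorphism $\pi_1(\Omega_\Gamma) \to \ZZ$ sending $[\gamma]$ to $\mathrm{lk}(\gamma, R_0) = \pm 1$, which is nonzero; hence $[\gamma]$ is nontrivial in $\pi_1(\Omega_\Gamma)$. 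The main technical point is the asymptotic claim that the two ends of $R_0 \setminus \{[p_B]\}$ escape in opposite longitudinal directions of $\overline{\dhc{2} \setminus H}$; this follows from the $B$-invariance of $R_0$ via the translation argument above, and can alternatively be verified directly from the explicit lifts $p_B$, $B_1 p_B$, $B_1^{-1} p_B$ that parametrize $R_0$ in the proof of \Cref{prop:limit_set_R_circle}.
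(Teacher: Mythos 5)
Your argument is correct and follows essentially the same route as the paper: the complement of the horotube $H$ is a solid torus whose core is the $B$-invariant circle $R_0$ (compactified at $[p_B]$), so the meridian $\gamma$ has linking number $\pm 1$ with $R_0$ and is therefore nontrivial in $\dhc{2}\setminus R_0 \supseteq \Omega_\Gamma$. You merely spell out the details (the longitudinal winding of $R_0$ via $B$-invariance, and the detection of $[\gamma]$ by linking number) that the paper leaves implicit.
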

\begin{proof}
	Since $R_0 \subset \Lambda_\Gamma$, it is in the core of complement of $H$, so $\gamma$ is linked once around $R_0$. Therefore, it is not homotopically trivial in $S^3 \setminus R_0$, so neither in $\Omega_\Gamma$. 
\end{proof}

\begin{lemme}\label{lemma:homotopic_loop_intersects_D0}
	Let $D_0$ be an open disk in $\dhc{2}$ whose boundary is $R_0$ and that intersects $R_1$ at exactly one point $[p_0] \neq V[p_B]$. Then, any loop that is freely homotopic to $\gamma$ in $\Omega_\Gamma$ intersects $D_0$.
\end{lemme}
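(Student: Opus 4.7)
The plan is to argue by linking numbers, using the previous lemma as the starting point. By that lemma, $\gamma$ has linking number $\pm 1$ with $R_0$ in $S^3 \simeq \dhc{2}$, and in particular the linking number is nonzero. Since $R_0 \subset \Lambda_\Gamma$, the circle $R_0$ is disjoint from $\Omega_\Gamma$, so the linking number with $R_0$ is a well-defined invariant of the free homotopy class of a loop in $\Omega_\Gamma$: any free homotopy in $\Omega_\Gamma$ stays disjoint from $R_0$, and the linking number is preserved under such a homotopy. Hence, if $\gamma'$ is any loop in $\Omega_\Gamma$ freely homotopic to $\gamma$ in $\Omega_\Gamma$, then $\mathrm{lk}(\gamma', R_0) = \pm 1 \neq 0$.

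Next, I would use the fact that $D_0$ is an embedded open disk in $S^3$ whose boundary is $R_0$; the closure $\overline{D_0}$ is a (topological) Seifert surface for $R_0$. After a small perturbation of $\gamma'$ in $\Omega_\Gamma$ (making it smooth and transverse to $D_0$) within its free homotopy class in $\Omega_\Gamma$, the linking number $\mathrm{lk}(\gamma',R_0)$ equals the algebraic intersection number of $\gamma'$ with $\overline{D_0}$. Because $\gamma'$ lies in $\Omega_\Gamma$ and $\partial D_0 = R_0 \subset \Lambda_\Gamma$, we have $\gamma' \cap \partial D_0 = \emptyset$, so this algebraic intersection is really with the open disk $D_0$.

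Finally, since the algebraic intersection number of $\gamma'$ with $D_0$ is $\pm 1$, in particular the geometric intersection $\gamma' \cap D_0$ is nonempty. This is exactly the conclusion of the lemma.

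The only subtle point is the transversality step: one needs $D_0$ itself to be smooth enough (or at least tame) to guarantee that the intersection number equals the linking number and that a small perturbation of $\gamma'$ in $\Omega_\Gamma$ achieves transversality. I do not expect a genuine obstacle here, since $D_0$ can be chosen as a smooth embedded disk with $\partial D_0 = R_0$ and the loop $\gamma'$ can be approximated by a smooth loop in the open set $\Omega_\Gamma$; the assumption $[p_0] \neq V[p_B]$ plays no role in the present lemma and is recorded only for later use. Thus the whole argument is really a direct application of the homotopy invariance of linking number in the complement of $R_0$.
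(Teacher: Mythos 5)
Your argument is correct and is essentially the paper's: both proofs rest on the fact that $\gamma$ links $R_0$ and that linking with $R_0$ is invariant under free homotopy inside $\Omega_\Gamma \subset \dhc{2}\setminus R_0$. The paper sidesteps the transversality/intersection-number step you flag as the subtle point by arguing by contradiction instead: a loop missing $D_0$ (and missing $R_0$, being in $\Omega_\Gamma$) lies in the complement of the closed disk $\overline{D_0}$, which is simply connected, hence is null-homotopic in $\dhc{2}\setminus R_0$, contradicting the previous lemma.
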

\begin{proof}
	Let $\gamma'$ be a loop that is freely homotopic to $\gamma$ in $\Omega_\Gamma$, and suppose that $\gamma' \cap D_0 = \emptyset$.
	Since $\overline{D_0}$ is a closed disk in $\dhc{2} \simeq S^3$, its complement is simply connected. Hence, $\gamma'$ is homotopically trivial in $\dhc{2} \setminus \overline{D_0}$, and therefore in $\dhc{2} \setminus R_0$. But $\gamma$ is linked with $R_0$ and freely homotopic to $\gamma'$ in $\Omega_\Gamma \subset \dhc{2} \setminus R_0$, which is a contradiction.
\end{proof}
	
\begin{prop}
	The fundamental group of $\Omega_\Gamma$ is not finitely generated.
\end{prop}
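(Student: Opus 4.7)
The plan is to argue by contradiction. Suppose $\pi_1(\Omega_\Gamma, x_0)$ is generated by finitely many based loops $\alpha_1,\dots,\alpha_k$, and let $K \subset \Omega_\Gamma$ be the compact set formed by the union of their images together with $x_0$. Every element of $\pi_1(\Omega_\Gamma, x_0)$ then admits a representative given by a concatenation of the $\alpha_i^{\pm 1}$, which is a loop contained in $K$. Since every free homotopy class in $\Omega_\Gamma$ contains a loop through $x_0$, I conclude that every free homotopy class in $\pi_1(\Omega_\Gamma)$ admits a representative contained in the single compact set $K$.

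To reach a contradiction, I would exploit the parabolic element $A \in \Gamma$ with fixed point $[p_A] \in \Lambda_\Gamma$, and consider for each $n \in \NN$ the translated loop $A^n \gamma$. Since $A$ acts as an orientation-preserving self-homeomorphism of $S^3 \simeq \dhc{2}$, the loop $A^n \gamma$ is linked once with the $\RR$-circle $A^n R_0 \subset \Lambda_\Gamma$. The proof of \Cref{lemma:homotopic_loop_intersects_D0} uses only that $\overline{D_0}$ is an embedded closed disk in $S^3$ bounding $R_0$, so it applies verbatim to the translated data, and any loop freely homotopic to $A^n \gamma$ in $\Omega_\Gamma$ must intersect $A^n D_0$. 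In particular, any representative $\beta_n \subset K$ of the free homotopy class of $A^n \gamma$ must satisfy $\beta_n \cap A^n D_0 \neq \emptyset$.

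The contradiction will then come from parabolic dynamics. One checks from the explicit description of $R_0$ in \Cref{prop:limit_set_R_circle} that $[p_A] \notin R_0$, so I may choose $D_0$ so that $[p_A] \notin \overline{D_0}$ as well. Then $\overline{D_0}$ is a compact subset of $\dhc{2} \setminus \{[p_A]\}$, and by the standard convergence property of parabolic isometries of complex hyperbolic space, $A^n \overline{D_0}$ collapses uniformly to $\{[p_A]\}$ in Hausdorff distance as $n \to \infty$. Since $K \subset \Omega_\Gamma$ is compact and $[p_A] \in \Lambda_\Gamma$ lies at positive distance from $K$, for all sufficiently large $n$ we have $A^n D_0 \cap K = \emptyset$, contradicting $\beta_n \cap A^n D_0 \neq \emptyset$.

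The main technical step I expect to need care is the verification that $D_0$ can be chosen to avoid $[p_A]$ while retaining the hypotheses of \Cref{lemma:homotopic_loop_intersects_D0}; a small perturbation should suffice, and the intersection condition with $R_1$ plays no role in the proof of that lemma, so it does not obstruct the choice. The other ingredient to invoke carefully is the uniform convergence $A^n C \to [p_A]$ for compact $C \subset \dhc{2} \setminus \{[p_A]\}$; this is a standard dynamical property of parabolic isometries (for instance, in the Siegel model with $[p_A]$ at infinity, $A$ acts as a Heisenberg translation, which sends any bounded set to infinity).
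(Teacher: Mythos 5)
Your argument is correct, but it takes a genuinely different route from the paper's. The paper invokes Scott's compact core theorem to produce a compact $N \subset \Omega_\Gamma$ carrying all of $\pi_1(\Omega_\Gamma)$, keeps the loop $\gamma$ and the disk $D_0$ fixed, and pushes $N$ by iterates of the parabolic $C = VBV^{-1}$ into a neighborhood of $V[p_B]$ disjoint from $\overline{D_0}$ --- this is exactly why the statement of \Cref{lemma:homotopic_loop_intersects_D0} carries the hypothesis that $D_0$ meets $R_1$ only at a point distinct from $V[p_B]$. You instead use the elementary observation that the images of finitely many generating loops form a single compact set $K$ containing a representative of every free homotopy class (which makes Scott's theorem unnecessary --- it is overkill for this argument in the paper as well), keep $K$ fixed, and push the loop--disk pair by iterates of the parabolic $A$; the relevant fixed point $[p_A]$ is then automatically separated from $K$ because it lies in $\Lambda_\Gamma$. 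Your two supporting checks are both sound and necessary: the proof of \Cref{lemma:homotopic_loop_intersects_D0} indeed uses only that $\overline{D_0}$ is a tamely embedded closed disk spanning $R_0$ (the condition on $R_1$ enters only in the paper's use of $V[p_B]$), and $[p_A] \notin R_0$ follows from the explicit spanning vectors in \Cref{prop:limit_set_R_circle} (the second coordinate of $x_1 p_B + x_2 B_1 p_B + x_3 B_1^{-1}p_B$ vanishes for real $x_i$ only when $x_2 = x_3 = 0$, which gives $[p_B]$, not $[p_A]$). The uniform escape to $[p_A]$ of $A^n\overline{D_0}$ is immediate since $A$ acts as a non-vertical Heisenberg translation in the Siegel model with $[p_A]$ at infinity. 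In short: same key mechanism (linking with $R_0$ forces intersection with a spanning disk, incompatible with parabolic dynamics), but your version trades Scott's theorem for an elementary compactness argument and trades the auxiliary parabolic $C$ for $A$, at the cost of having to verify $[p_A]\notin R_0$ and reposition $D_0$.
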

\begin{proof}
	We prove the proposition by contradiction. Suppose that $\pi_1(\Omega_\Gamma)$ is finitely generated.
	Then, by Scott's compact core theorem (see \cite{scott_compact_1973}), there is a compact 3-manifold $N \subset \Omega_\Gamma$ such that the induced map $\pi_1(N) \to \pi_1(\Omega_\Gamma)$ is an isomorphism.
	
	Let $C= VBV^{-1}$ be a horizontal unipotent element in $\Gamma$, with fixed point $V[p_B]$. We know that $V[p_B] \notin \overline{D_0} = D_0 \cup R_0$ by construction. Since $\overline{D_0}$ is compact, there is an open neighborhood $\mathcal{U}$ of $V[p_B]$ in $\dhc{2}$ such that $\mathcal{U} \cap \overline{D_0} = \emptyset$.
	Since $N$ is compact and does not contain $V[p_B]$ (because $V[p_B] \notin \Omega_\Gamma$), there is $n \in \NN$ such that $C^n N \subset \mathcal{U}$.
	But $C^n$ is an automorphism of $\Omega_\Gamma$, so $C^nN$ satisfies the conclusion of the Scott theorem. Hence, the natural map $\pi_1(C^nN) \to \pi_1(\Omega
	_\Gamma)$ is still an isomorphism. Thus, there is a loop $\gamma_1 \subset C^n N$ that is freely homotopic to $\gamma$ in $\Omega_\Gamma$. But $C^n N \subset \mathcal{U}$, that has empty intersection with $D_0$.
	Hence, $\gamma_1 \cap D_0 = \emptyset$, which contradicts \Cref{lemma:homotopic_loop_intersects_D0}. 
\end{proof}

\bibliographystyle{alpha}
\bibliography{limit_sets_cr}

\end{document}